\documentclass[a4paper, 12pt]{article}
\usepackage[a4paper, left=3cm, right=3cm]{geometry}
\usepackage[utf8]{inputenc}
\pdfoutput=1
\usepackage[english]{babel}
\usepackage[T1]{fontenc}
\usepackage{mathtools}
\usepackage{amssymb}
\setcounter{secnumdepth}{5}
\setcounter{tocdepth}{5}
\usepackage{paralist}
\usepackage{tikz}
\usepackage{amsmath}
\usepackage{mathrsfs}
\usepackage{tikz-cd}
\usepackage{amsthm}
\usepackage{lscape}
\usepackage{dynkin-diagrams}
\usepackage{ytableau}
\usepackage{enumitem}
\usepackage{hyperref}
\usepackage{multirow}
\usepackage{csquotes}
\usepackage{orcidlink}
\usetikzlibrary{arrows}
\usepackage{booktabs}
\setlength\heavyrulewidth{0.08em}
\setlength\lightrulewidth{0.05em}
\usepackage{url}

\bibliographystyle{plain}

\newtheorem{theorem}{Theorem}[section]
\newtheorem{lemma}[theorem]{Lemma}
\newtheorem{corollary}[theorem]{Corollary}

\newtheorem{proposition}[theorem]{Proposition}

\theoremstyle{definition}
\newtheorem{example}[theorem]{Example}
\newtheorem{remark}[theorem]{Remark}
\newtheorem{definition}[theorem]{Definition}

\numberwithin{subcase}{case}
\newtheorem*{Ack}{Acknowledgments}

\begin{document}
\title{On the Gram determinants of the Specht modules}
\author{Linda Hoyer\orcidlink{0009-0009-3710-6371}\footnote{linda.hoyer@rwth-aachen.de}}
\date{Lehrstuhl f\"ur Algebra und Zahlentheorie, RWTH Aachen University, Germany}
\maketitle
\begin{abstract}
For every partition $\lambda$ of a positive integer $n$, let $S^{\lambda}$ be the corresponding Specht module of the symmetric group $\mathfrak{S}_n$, and let $\det(\lambda)\in \mathbb Z$ denote the Gram determinant of the canonical bilinear form with respect to the standard basis of $S^{\lambda}$. Writing $\det(\lambda)=m \cdot 2^{a_{\lambda}^{(2)}}$ for integers $a_{\lambda}^{(2)}$ and $m$ with $m$ odd, we show that if the dimension of $S^{\lambda}$ is even, then $a_{\lambda}^{(2)}$ is also even. This confirms a conjecture posed by Richard Parker in the special case of the symmetric groups. \\
 {\sc Keywords}:  Symmetric group, Specht module, Young diagram, beta-numbers, hook removal, Gram determinant. 
{\sc MSC}: 05E10.
\end{abstract}

\section{Introduction}
Let $n$ be a positive integer and let $\mathfrak{S}_n$ be the symmetric group of $n$ letters. There is a canonical bilinear form on the Specht module $S^{\lambda}$ for a partition $\lambda$ of $n$, arising from the embedding of the Specht module in a permutation representation. In \cite{JamesMurphyFormula}, James and Murphy introduced the determinant $\det(\lambda) \in \mathbb{Z}$ of the Gram matrix of the canonical bilinear form of $S^{\lambda}$ with respect to the standard basis. For any prime $p$, we define the integer $a_{\lambda}^{(p)}$ such that \[ \det(\lambda)=m \cdot p^{a_{\lambda}^{(p)}},\] where $m$ is coprime to $p$. We further denote $f_{\lambda}:=\dim(S^{\lambda})$ and say that $\lambda$ is \textit{even} (resp. \textit{odd}) if and only if $f_{\lambda}$ is.

A major challenge in representation theory is the determination of the dimensions of the irreducible representations $D^{\lambda}$ of $\mathfrak{S}_n$  in positive characteristic $p$ for $\lambda$ a $p$-regular partition of $n$. Some results are known, for instance by \cite[Theorem 7.3.18]{james1981representation} it holds that \[ \dim(D^{\lambda}) \geq f_{\lambda}- a_{\lambda}^{(p)}.\] Now, there is an easy formula for $f_{\lambda}$ given by the hook lengths of the Young diagram $[\lambda]$, leaving open if there is something to be said about $a_{\lambda}^{(p)}$.

In this paper, we will show that for $p=2$, the following statement holds:
\begin{theorem}
\label{Main-Thm-Symmetric}
Let $\lambda$ be an even partition of $n$. Then $a_{\lambda}^{(2)}$ is even.
\end{theorem}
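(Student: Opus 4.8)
The plan is to derive the result from the explicit formula of James and Murphy \cite{JamesMurphyFormula} for $\det(\lambda)$. Extracting the $2$-part of that formula yields a Schaper-type sum
$$a_\lambda^{(2)} \;=\; \sum_{\mu} c_{\lambda\mu}\, f_\mu ,$$
in which $\mu$ runs over a finite set of partitions of $n$ whose first-column hook lengths (beta-numbers) are obtained from those of $\lambda$ by small, explicit modifications, and each coefficient $c_{\lambda\mu}$ is an explicit signed sum of $2$-adic valuations of hook-length data attached to the difference between the beta-sets of $\lambda$ and $\mu$. I would begin by recording this formula alongside the classical beta-number expression $f_\lambda = n!\,\prod_{i<j}(\beta_i-\beta_j)\big/\prod_i \beta_i!$ for the dimension, so that both quantities in the theorem live in the same combinatorial language.

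Working modulo $2$, the theorem becomes the statement that $\sum_{\mu}(c_{\lambda\mu}\bmod 2)(f_\mu\bmod 2)\equiv 0$ whenever $f_\lambda$ is even, and the natural way to reach this is to prove that any $\mu$ contributing an odd term --- one with both $f_\mu$ and $c_{\lambda\mu}$ odd --- forces $\lambda$ to be odd as well. The leverage is that whether $\nu_2(f_\lambda)$ is positive is dictated, via Kummer's theorem and the displayed product, by the carry patterns among the beta-numbers of $\lambda$ (that is, by the valuations $\nu_2(\beta_i!)$ and $\nu_2(\beta_i - \beta_j)$); since the beta-set of each relevant $\mu$ differs from that of $\lambda$ only in a tightly controlled way, and since the parity of $c_{\lambda\mu}$ is governed by $2$-adic valuations of the same flavour, one can compare the carry patterns of $\lambda$ and $\mu$ directly and shuttle parity information among $f_\lambda$, $f_\mu$ and $c_{\lambda\mu}$. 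To keep the bookkeeping tractable I would set this up as an induction via hook removal: when $\lambda$ admits a removable rim $2$-hook, pass to the partition $\lambda^{-}$ obtained by removing it and relate the James--Murphy products of $\lambda$ and $\lambda^{-}$ up to odd factors and perfect squares, tracking parities on the abacus; the remaining cases --- small $n$, and the $2$-cores (the staircases $(\ell,\ell-1,\dots,1)$, all of whose hook lengths are odd) --- would be settled by a direct analysis of the formula in that restricted setting.

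I expect the main obstacle to be the simultaneous control of the two parities appearing in each term of the sum. The parity of $f_\mu$ reflects the global carry structure of the beta-set of $\mu$, while $c_{\lambda\mu}$ is a signed sum whose terms may cancel, so pinning down exactly when $c_{\lambda\mu}$ is odd --- and doing so compatibly with the parity of $f_\mu$ --- is delicate; moreover the vanishing forced by ``$f_\lambda$ even'' is a global cancellation rather than a term-by-term one. I therefore anticipate needing either a sign-reversing involution on the set of $\mu$ contributing odd terms, or an inductive statement strengthened to carry a parity invariant of the entire beta-set (not merely of $f_\lambda$) through the hook removal, with the alignment of the signs in the formula against the dimension multiplicities being the crux.
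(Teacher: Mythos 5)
Your starting point (the James--Murphy formula, Theorem \ref{James-Murphy Formula}) and your reduction to a parity statement about $\sum_{\mu} c_{\lambda\mu} f_{\mu} \bmod 2$ match the paper's opening move, and you correctly diagnose that the required vanishing is a global cancellation that needs some pairing of the contributing $\mu$. But the proposal stops exactly where the work begins, and the two concrete mechanisms you suggest both point in the wrong direction. First, your ``natural way'' --- showing that any $\mu$ with $f_{\mu}$ and $c_{\lambda\mu}$ both odd forces $\lambda$ to be odd --- is false: for even $\lambda$ there are typically odd branches $\mu$ for which $h^1_{\lambda}(\mu)/h^2_{\lambda}(\mu)$ has odd $2$-adic valuation (for instance the classes in Lemma \ref{Classif Distance 2 Part 2}(i), where the valuation cancels only across the pair $\{\mu,\mathrm{Part}(\alpha_{\mu})\}$, never term by term). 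Second, the induction you propose --- stripping rim $2$-hooks down to the $2$-core --- is the wrong hook removal. The parity of $f_{\mu}$ is governed by the $2^r$-hook with $2^r\le n<2^{r+1}$ (Lemma \ref{OddnessPartitions}, i.e.\ the Macdonald/Olsson criterion), and removing a $2$-hook changes $n$ by $2$ and destroys any controlled relationship between the odd branches of $\lambda$ and those of $\lambda^{-}$; there is no reason the two James--Murphy products should agree up to odd numbers and squares, and that unproved claim is essentially the whole theorem.

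The missing idea is the correct pairing: set $D(\mu)=\mathrm{core}_{2^r}(\mu)$ and declare two odd branches equivalent when they have the same image under $D$. The paper's entire argument consists of classifying these equivalence classes (they have size at most $4$) according to the distance $d(\lambda,D(\mu))\in\{2,3\}$ and the positions of $h^1_{\lambda}(\mu)$, $h^2_{\lambda}(\mu)$ relative to $2^r$, showing that each class either contributes an odd square-free class on its own or corresponds, with identical hook data, to a single odd branch of $D(\lambda)$ (Corollary \ref{MainCorollary} together with the surjectivity statement of Lemma \ref{Preimage of D}), and then inducting on the number of iterations of $D$ needed to reach a fixed point. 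Without this $2^r$-core pairing --- or an equivalent involution, which you anticipate as necessary but do not construct --- the proposal does not yield a proof.
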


This confirms a special case of a conjecture of Richard Parker (see \cite[Conjecture~1.3]{NebeOrthDisc}) about the Gram determinants (called the \textit{orthogonal determinants} in loc. cit.) of the $\rho(G)$-invariant, non-degenerate, symmetric bilinear forms of the representations $\rho$ affording the \textit{orthogonally stable} characters of $G$, for $G$ an arbitrary finite group.

Note that Theorem \ref{Main-Thm-Symmetric} fails to hold for some odd partitions; indeed, already $\det((1,1))=2$. This implies that understanding the parity of a partition will be of importance to us. 

We now explain our strategy for the proof of Theorem \ref{Main-Thm-Symmetric}.
Let $r$ be the integer such that $2^r \leq n < 2^{r+1}$. In \cite[Theorem A]{HookRemovalNavarro} it was shown that for an odd partition $\mu$ of $n$, there is a unique odd partition $D(\mu)$ of $n-2^r$ such that the character $\chi_{D(\mu)}$ of $\mathfrak{S}_{n-2^r}$ is a constituent of odd multiplicity of $\mathrm{Res}^{\mathfrak{S}_n}_{\mathfrak{S}_{n-2^r}}(\chi_{\mu})$. The partition $D(\mu)$ can explicitly be constructed by removing the unique $2^r$-hook from the Young diagram of $\lambda$, see also \cite{OddLattice}. We extend the definition of $D$ to the set of all partitions, where $D$ in particular preserves the parity. We define the \textit{oddness rank} to be the minimal number of iterations of $D$ until the result becomes stationary, with a partition having maximal oddness rank if and only if it is odd.

The formula given in \cite{JamesMurphyFormula} for $\det(\lambda)$ uses certain partitions that arise by removing the rim of some cell $(i,j)$ from the Young diagram of $\lambda$ and then wrapping it around some row $k$ with $k<i$. In \cite{Branches}, these partitions were called the \textit{branches} of $\lambda$. We will investigate the \textit{odd branches} $\mathscr{P}_{\lambda}^{\mathrm{odd}}$, for which we introduce an equivalence relation on $\mathscr{P}_{\lambda}^{\mathrm{odd}}$ by saying that $\mu_1 \sim \mu_2$ if and only if $D(\mu_1)=D(\mu_2)$. The desired statement then follows by an induction on the oddness rank of $\lambda$ and the understanding of the arising equivalence classes of $\mathscr{P}_{\lambda}^{\mathrm{odd}}$.

This paper is a slightly modified version of results that have appeared in the author's PhD thesis \cite{HoyerThesis}. In this thesis, we investigated the orthogonal determinants of the finite groups of Lie type, the finite Coxeter groups and the alternating groups. As it turns out, Theorem \ref{Main-Thm-Symmetric} has far-reaching consequences for the orthogonal determinants of other classes of groups: Since the characters of the alternating groups and the characters of the Coxeter groups of types $B_n$ and $D_n$ are built from the characters of $\mathfrak{S}_n$, the theorem allows us to extend the result to these groups. With some computer help for the exceptional groups, all finite Coxeter groups can be covered. Moreover, Theorem \ref{Main-Thm-Symmetric} is essential in proving \cite[Conjecture 1.3]{NebeOrthDisc} for the  general linear groups $\mathrm{GL}_n(q)$ for an odd prime power $q$; this relies on first reducing to the unipotent characters and then using the formulas of the Gram determinants of the Specht modules of Iwahori--Hecke algebras of type $A_n$ found in \cite{DeterminantsIwahoriHecke}, which will be discussed elsewhere.

    \section{Preliminaries}

 We fix a positive integer $n$. A \textit{partition} is a sequence $\lambda=(a_1, \dots, a_m)$ of non-negative integers such that $a_1 \geq a_2 \geq \dots \geq a_m$. We identify partitions that only differ by a string of zeros at the end. We say that $\lambda$ is a partition of $n$ if $|\lambda|:= a_1+a_2+ \dots +a_m=n.$ We denote by $\mathscr{P}_n$ the set of partitions of $n$ and by $\mathscr{P}:= \bigcup_{i=0}^{\infty} \mathscr{P}_i$ the set of all partitions. If $\mu=(c_1, \dots, c_l)$ is another partition of $n$, we write $\lambda \trianglelefteq \mu$ and say that $\lambda$ proceeds $\mu$ in the \textit{dominance (partial) order} if and only if $a_1 + \dots +a_i \leq c_1+ \dots +c_i$ for all $i \geq 1$. We denote \[[\lambda]=\{ (i,j) \mid (i,j) \in \mathbb Z \times \mathbb Z, 1 \leq i \leq m, 1 \leq j \leq a_m \}\] to be the \textit{Young diagram} of $\lambda$, which we can depict as a set of cells with $a_1$ cells in the first row, $a_2$ cells in the second row, and so on, see Figure \ref{Figure1}. For a cell $c \in [\lambda]$, we denote the \textit{hook length} $h_{\lambda}(c)$ to be the number of cells in the \textit{hook} of $c$, i.e., the number of cells below and to the right of $c$, including itself. The \textit{hook diagram} of $\lambda$ is the Young diagram, where each cell is filled with its hook length.

 \begin{figure}[ht]
     \centering
   \begin{ytableau}
                9 & *(lightgray) 8 & *(lightgray) 7 & *(lightgray) 5 & *(lightgray) 3 & *(lightgray) 2 & *(lightgray) 1 \\
                5 & *(lightgray) 4 &  3 & 1 \\
                3 & *(lightgray) 2 &  1
            \end{ytableau}
     \caption{Hook diagram of $(7,4,3)$, with the hook of the cell $c=(1,2)$ colored gray.}
     \label{Figure1}
 \end{figure}

Note that the entries of the first column of a hook diagram uniquely determine the underlying partition. Generalizing that observation, a \textit{sequence of $\beta$-numbers} is a sequence $\beta=(b_1, \dots, b_m)$ of non-negative integers such that all its entries are distinct. For each such sequence, there is a unique $\sigma \in \mathfrak{S}_m$ such that $\beta=(b_{\sigma(1)}, \dots, b_{\sigma(m)})$ is a strictly decreasing sequence, i.e., $b_{\sigma(1)} > b_{\sigma(2)} > \dots > b_{\sigma(m)}$. We define the partition \[\mathrm{Part}(\beta) \coloneqq (b_{\sigma(1)}-m+1,b_{\sigma(2)}-m+2,\dots,b_{\sigma(m)}-m+m)\] and say that $\beta$ is a sequence of $\beta$-numbers for $\mathrm{Part}(\beta)$. Moreover, we say that $\ell(\beta) \coloneqq m$ is the length of $\beta$ and define $|\beta|:= |\mathrm{Part}(\beta)|$.  

Let $\mathscr{B}$ be the set of all sequences of $\beta$-numbers. We define the subsets \[\mathscr{B}_n:= \{ \beta \in \mathscr{B} \mid |\beta|=n \}\] of the sequences of $\beta$-numbers of $n$ and \[\mathscr{B}(\lambda):= \{ \beta \in \mathscr{B} \mid \mathrm{Part}(\beta)=\lambda \}\] of the sequences of $\beta$-numbers for the partition $\lambda$.

\begin{example}
Let $\lambda=(7,4,3)$ as in Figure \ref{Figure1}. The first column of its hook diagram consists of the sequence $(9,5,3)$, so $(9,5,3)$ is a sequence of $\beta$-numbers for $\lambda$. Further sequences of $\beta$-numbers for $\lambda$ include $(10,6,4,0)$, $(11,7,5,1,0)$ and $(1,0,6,8,12,2)$.  
\end{example}
 
%We may combine repeated elements in a partition, e.g., we may also write $(5,4^{(3)},2^{(0)},1^{(2)})$ for $(5,4,4,4,1,1)$, if convenient. 

Recall that there is a 1-to-1 correspondence between the partitions $\lambda$ of a positive integer $n$ and the irreducible characters $\chi_{\lambda}$ of the symmetric group $\mathfrak{S}_n$. The following well-known formula gives a convenient way to calculate the degree of each character.  
\begin{proposition}(cf. \cite[Theorem 2.3.21]{james1981representation})
\label{DegreeSpechtModule}
Let $\lambda$ be a partition of $n$. Then 
\[
f_{\lambda}:=\chi_{\lambda}(1)=\frac{n!}{\prod_{c \in [\lambda]} h_{\lambda}(c)}.
            \]
We further set $f_{(0)}:=1$. A partition is said to be odd (resp. even) if and only if $f_{\lambda}$ is odd (resp. even).           
\end{proposition}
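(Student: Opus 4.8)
The plan is to derive the hook length formula from the classical determinantal (Frobenius--Young) expression for the number of standard Young tableaux, together with a purely combinatorial identity relating hook lengths to $\beta$-numbers; both ingredients sit comfortably inside the $\beta$-number formalism just set up. Since the standard polytabloids form a $\mathbb{Q}$-basis of $S^{\lambda}$, one has $f_{\lambda}=\chi_{\lambda}(1)=\#\{\text{standard }\lambda\text{-tableaux}\}$, so it suffices to evaluate this combinatorial quantity.

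\textbf{Step 1 (determinantal formula).} Let $\beta=(b_1,\dots,b_m)$ be the strictly decreasing sequence of $\beta$-numbers for $\lambda$ of length $m$, so that $b_i=\lambda_i+m-i$. I would prove
\[
f_{\lambda}=\frac{n!}{b_1!\,b_2!\cdots b_m!}\prod_{1\le i<j\le m}(b_i-b_j).
\]
The cleanest route is induction on $n$ via the branching rule $f_{\lambda}=\sum_{\lambda^{-}}f_{\lambda^{-}}$, the sum running over all partitions obtained from $\lambda$ by deleting a removable cell. On the $\beta$-side, deleting a removable cell in row $i$ amounts to replacing $b_i$ by $b_i-1$, which is legitimate exactly when $b_i-1\notin\{b_1,\dots,b_m\}$, and removable cells correspond bijectively to such admissible decrements; a short manipulation of the Vandermonde-type product then shows the displayed right-hand side satisfies the same recursion, with base case $f_{(0)}=1$. (Alternatively one may invoke the Weyl dimension formula, or Jacobi--Trudi for the Schur function $s_{\lambda}$, and extract the coefficient of $x_1\cdots x_n$.)

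\textbf{Step 2 (hook lengths from $\beta$-numbers).} For each row $i$, I would check directly from the definition of the hook of $(i,j)$ that the multiset $\{\,h_{\lambda}(i,j):1\le j\le\lambda_i\,\}$ equals $\{1,2,\dots,b_i\}\setminus\{\,b_i-b_j:i<j\le m\,\}$: the arm of $(i,j)$ is controlled by the columns $j,\dots,\lambda_i$ and the leg by the rows $i+1,\dots,m$, and translating both into $\beta$-numbers yields precisely this complement. Hence
\[
\prod_{c\in[\lambda]}h_{\lambda}(c)=\prod_{i=1}^{m}\frac{b_i!}{\prod_{j>i}(b_i-b_j)}=\frac{\prod_{i=1}^{m}b_i!}{\prod_{1\le i<j\le m}(b_i-b_j)},
\]
and combining with Step 1 gives $f_{\lambda}=n!\big/\prod_{c\in[\lambda]}h_{\lambda}(c)$, as claimed.

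I expect Step 1 to be the main obstacle: although the branching-rule induction is elementary, verifying that a single $\beta$-number decrement matches a removable-cell deletion and that the Vandermonde product transforms correctly requires a careful (if routine) case analysis, while the symmetric-function alternative trades this for the machinery underlying the dimension formula for $s_{\lambda}$. Step 2, by contrast, is a straightforward unwinding of definitions. It is worth noting that the formula also admits an entirely self-contained probabilistic proof --- the Greene--Nijenhuis--Wilf hook walk --- which bypasses $\beta$-numbers altogether at the cost of setting up the relevant random process; I would mention it for completeness but would not adopt it here, since the $\beta$-number route meshes better with the rest of the paper.
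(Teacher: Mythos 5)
Your proposal is correct and follows essentially the standard route of the cited reference (James--Kerber, Theorem 2.3.21), which the paper itself quotes without proof: establish the Frobenius--Young determinantal formula in terms of $\beta$-numbers and combine it with the identity that the hook lengths of row $i$ are $\{1,\dots,b_i\}\setminus\{b_i-b_j : j>i\}$. Both steps are accurately described and the claimed identities are the right ones, so nothing further is needed.
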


We say that a partition $\lambda$ is \textit{$q$-core} for some positive integer $q$ if there is no $c \in [\lambda]$ such that $q \mid h_{\lambda}(c)$. There is a unique partition $\mathrm{core}_q(\lambda)$, which is $q$-core, that can be constructed from $[\lambda]$ by successively removing hooks of cells with hook lengths a multiple of $q$, see also \cite[Section 2.7]{james1981representation}. This is where sequences of $\beta$-numbers come into play; removing a hook from a Young diagram corresponds to subtracting a multiple of a unit vector from a sequence of $\beta$-numbers for $\lambda$.

\begin{example}
 Let $\lambda=(7,4,3)$. Then $\lambda$ is not $8$-core, as $h_{\lambda}((1,2))=8$. Removing the corresponding hook, we end up with $\mathrm{core}_8(\lambda)=(3,2,1)$.    
\end{example}

We shall need a criterion on the parity of $f_{\lambda}$. Let $r$ be the non-negative integer such that $2^r \leq n <2^{r+1}$.

\begin{lemma}(cf. \cite[Lemma 1]{OddLattice})
\label{OddnessPartitions}
Let $\lambda$ be a partition of $n$. There exists at most one cell $c \in [\lambda]$ such that $h_{\lambda}(c)=2^r$. Moreover, $\lambda$ is odd if and only if such a cell exists and $\mathrm{core}_{2^r}(\lambda)$ is odd. 
\end{lemma}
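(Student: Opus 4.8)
The plan is to prove the two assertions in turn. For the first, recall that the number of cells $c \in [\lambda]$ with $2^{r} \mid h_{\lambda}(c)$ equals the $2^{r}$-weight $w := (n - |\mathrm{core}_{2^{r}}(\lambda)|)/2^{r}$ of $\lambda$; since $w \cdot 2^{r} \le n < 2^{r+1}$ we get $w \in \{0,1\}$, so at most one cell of $[\lambda]$ has hook length divisible by $2^{r}$, and in particular at most one has hook length exactly $2^{r}$. As hook lengths are bounded by $n < 2^{r+1}$, such a cell exists precisely when $w = 1$, i.e.\ precisely when $\lambda \neq \mathrm{core}_{2^{r}}(\lambda)$. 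Hence the second assertion reduces to the following two claims, where $v_{2}$ denotes the $2$-adic valuation on $\mathbb{Z}$: (i) if $w = 1$, then $v_{2}(f_{\lambda}) = v_{2}(f_{\mathrm{core}_{2^{r}}(\lambda)})$; (ii) if $w = 0$ (so $\lambda = \mathrm{core}_{2^{r}}(\lambda)$ and $n \ge 2^{r}$), then $f_{\lambda}$ is even.

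For (i) I would compute with $\beta$-numbers. Take the sequence of $\beta$-numbers for $\lambda$ given by the first-column hook lengths, with underlying set $B$; all elements of $B$ lie in $[1,n] \subseteq [1, 2^{r+1})$. A standard identity, read off from the hook-length description of $[\lambda]$, gives
\[
\prod_{c \in [\lambda]} h_{\lambda}(c) \;=\; \frac{\prod_{b \in B} b!}{\prod_{\{b,b'\} \subseteq B} |b - b'|},
\]
and removing the unique $2^{r}$-hook of $\lambda$ amounts to replacing the unique $b \in B$ with $b - 2^{r} \ge 0$ and $b - 2^{r} \notin B$ by $b - 2^{r}$; this yields a sequence of $\beta$-numbers for $\mu := \mathrm{core}_{2^{r}}(\lambda)$ with underlying set $B' = (B \setminus \{b\}) \cup \{b - 2^{r}\} \subseteq [0, 2^{r+1})$. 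Applying $v_{2}$ and Proposition~\ref{DegreeSpechtModule}, the difference $v_{2}(f_{\lambda}) - v_{2}(f_{\mu})$ equals
\[
v_{2}\!\left(\tfrac{n!}{(n-2^{r})!}\right) - v_{2}\!\left(\tfrac{b!}{(b-2^{r})!}\right) + \sum_{d \in B \setminus \{b\}} \bigl(v_{2}(b - d) - v_{2}(b - 2^{r} - d)\bigr).
\]
Now for each $d \in B \setminus \{b\}$ one has $|b - d| < 2^{r+1}$, and $d$ is neither $b - 2^{r}$ (as $b - 2^{r} \notin B$) nor $b + 2^{r}$ (as $b + 2^{r} \ge 2^{r+1} > n$); hence $|b-d| \neq 2^{r}$, so $v_{2}(b - d) < r$, and then $v_{2}(b - 2^{r} - d) = v_{2}(b - d)$ since the two integers are congruent modulo $2^{r}$. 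Thus the sum vanishes. Finally, using $v_{2}(m!) = m - s_{2}(m)$ (where $s_{2}$ counts the binary digits equal to $1$) together with $n, b \in [2^{r}, 2^{r+1})$ — so that passing from $n$, resp.\ $b$, to $n - 2^{r}$, resp.\ $b - 2^{r}$, merely deletes the top binary digit — both remaining terms equal $2^{r} - 1$, and they cancel. This proves (i).

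For (ii) I would invoke Macdonald's classical description of $v_{2}(f_{\lambda})$ in terms of the $2$-core tower of $\lambda$: if $c_{j}(\lambda)$ denotes the total size of the $2$-cores appearing at depth $j$, then $n = \sum_{j \ge 0} 2^{j} c_{j}(\lambda)$ and $v_{2}(f_{\lambda}) = \sum_{j \ge 0} c_{j}(\lambda) - s_{2}(n)$. Since $\lambda$ is $2^{r}$-core it has no hook length divisible by $2^{r}$, which forces $c_{j}(\lambda) = 0$ for all $j \ge r$, so $n = \sum_{j=0}^{r-1} 2^{j} c_{j}(\lambda)$. As $n \ge 2^{r}$, this representation of $n$ cannot be the binary one, and an elementary estimate — after carrying, $\sum_{j<r} c_{j}$ is minimised by the representation with $c_{r-1} = \lfloor n/2^{r-1} \rfloor \in \{2,3\}$, which gives $\sum_{j<r} c_{j}(\lambda) \ge s_{2}(n) + 1$ — yields $v_{2}(f_{\lambda}) \ge 1$, so $\lambda$ is even. (Alternatively, (ii) admits a $\beta$-number proof parallel to that of (i).)

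The main obstacle is the $2$-adic bookkeeping behind (i) and (ii): one must verify that comparing $\lambda$ with its $2^{r}$-core involves no $2$-adic contributions of valuation $\ge r$ — which works exactly because $n < 2^{r+1}$ — so that the two degrees differ $2$-adically only by the explicitly computable, cancelling amounts. The sub-case $w = 0$, where there is no hook to remove and one must bound $f_{\lambda}$ directly, is the most delicate point; it is there that routing the argument through the $2$-core tower formula is cleanest.
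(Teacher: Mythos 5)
The paper offers no proof of Lemma~\ref{OddnessPartitions} at all: it is quoted from \cite[Lemma~1]{OddLattice}, so there is no internal argument to compare yours against, and any correct self-contained proof is a genuine addition. Your argument is correct, modulo three standard facts that you should cite or verify explicitly: (a) the number of cells of $[\lambda]$ whose hook length is divisible by $q$ equals the $q$-weight of $\lambda$, which together with $h_{\lambda}(c)\le n<2^{r+1}$ gives both the first assertion and the dichotomy $w\in\{0,1\}$; (b) the identity $\prod_{c}h_{\lambda}(c)=\prod_{b\in B}b!\big/\prod_{\{b,b'\}}|b-b'|$ --- as classically stated this is for the first-column hook lengths, so you should remark that it is invariant under the shift $B\mapsto\{b+1: b\in B\}\cup\{0\}$ and hence applies to the $\beta$-set $B'$ of the core, which may contain $0$ or fail to be a first-column set; (c) Macdonald's $2$-core-tower formula for $v_{2}(f_{\lambda})$ from \cite{MacDonald}, together with the fact that $\lambda$ is a $2^{r}$-core if and only if its tower vanishes in rows $\ge r$. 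The $2$-adic bookkeeping in (i) is right: the only contributions of valuation $\ge r$ could come from a difference $|b-d|=2^{r}$, which is excluded precisely because $b+2^{r}\ge 2^{r+1}>n$ and $b-2^{r}\notin B$, and the two factorial ratios each contribute $2^{r}-1$ and cancel. In (ii) the sentence about the minimising representation with $c_{r-1}=\lfloor n/2^{r-1}\rfloor$ is garbled as written; the clean statement is simply that any representation $n=\sum_{j<r}2^{j}c_{j}$ with non-negative integers satisfies $\sum_{j<r}c_{j}>s_{2}(n)$, because the binary expansion is the unique digit-sum minimiser and it must use position $r$ since $n\ge 2^{r}$. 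With that sentence repaired and the three background facts referenced, your proof stands.
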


Let $e_i=(0,0, \dots,0,1,0,\dots)$ be the sequence that has a $1$ in position $i$ and $0$ everywhere else.
\begin{corollary}
\label{OddnessBetaNumbers}
Let $\lambda$ be a partition of $n$ and let $\beta_{\lambda} \in \mathscr{B}(\lambda)$. Then there is at most one index $i$ such that $\beta_{\lambda}-2^r e_i \in \mathscr{B}_{n-2^r}$. If such an index, say $j$, exists, then $\beta_{\lambda}-2^r e_j \in \mathscr{B}(\mathrm{core}_{2^r}(\lambda))$.  
\end{corollary}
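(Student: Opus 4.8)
The plan is to derive Corollary~\ref{OddnessBetaNumbers} directly from Lemma~\ref{OddnessPartitions} by exploiting the standard dictionary between hook removal on Young diagrams and subtraction of scaled unit vectors from a sequence of $\beta$-numbers. Recall that if $\beta_{\lambda}=(b_1,\dots,b_m)\in\mathscr{B}(\lambda)$ and $1\le i\le m$, then the vector $\beta_{\lambda}-q e_i$ has distinct, non-negative entries (i.e.\ lies in $\mathscr{B}$) if and only if $b_i\ge q$ and $b_i-q\notin\{b_1,\dots,b_m\}$; in that case, subtracting $q$ from the $i$-th bead corresponds precisely to removing a $q$-hook from $[\lambda]$, and the cell $c$ whose hook is removed satisfies $h_{\lambda}(c)=q$. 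Conversely, every removal of a $q$-hook from $[\lambda]$ is realised this way for exactly one index $i$ (the one whose bead sits at the arm-end of that hook). So, first I would spell out this correspondence as the bridge: the indices $i$ with $\beta_{\lambda}-2^r e_i\in\mathscr{B}_{n-2^r}$ are in bijection with the cells $c\in[\lambda]$ with $h_{\lambda}(c)=2^r$.

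Given that bijection, the first assertion is immediate: Lemma~\ref{OddnessPartitions} says there is at most one such cell $c$, hence at most one such index $i$. For the second assertion, suppose the index $j$ exists. Then $\beta_{\lambda}-2^r e_j$ is a sequence of $\beta$-numbers for the partition $\nu$ obtained from $\lambda$ by deleting its (unique) $2^r$-hook. It remains to identify $\nu$ with $\mathrm{core}_{2^r}(\lambda)$. Since $2^r\le n<2^{r+1}$, we have $n-2^r<2^r$, so $\nu$ is a partition of a non-negative integer strictly smaller than $2^r$; therefore $\nu$ has no cell with hook length divisible by $2^r$ at all, i.e.\ $\nu$ is $2^r$-core. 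By the uniqueness of the $2^r$-core (every maximal sequence of $2^r$-hook removals from $[\lambda]$ yields the same $2^r$-core, cf.\ \cite[Section~2.7]{james1981representation}), and since a single removal already lands us in a $2^r$-core, we get $\nu=\mathrm{core}_{2^r}(\lambda)$. Hence $\beta_{\lambda}-2^r e_j\in\mathscr{B}(\mathrm{core}_{2^r}(\lambda))$, as claimed.

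I expect the only genuinely delicate point to be the careful handling of the $\beta$-number bookkeeping, in particular making sure the claimed bijection between admissible indices and $2^r$-hooks is stated for an \emph{arbitrary} sequence $\beta_{\lambda}\in\mathscr{B}(\lambda)$ (not just the first-column hook-length sequence), and that it is genuinely a bijection rather than merely a surjection — one must check that two distinct beads cannot both be moved down by $2^r$ to produce the same cell removal, which is exactly where the ``at most one index'' of Lemma~\ref{OddnessPartitions} is used rather than re-proved. Everything else is a routine unwinding of definitions, together with the elementary observation $n-2^r<2^r$ which forces the resulting partition to already be a $2^r$-core.
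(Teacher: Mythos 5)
Your proposal is correct, and it is precisely the argument the paper leaves implicit: the corollary is stated without proof as an immediate translation of Lemma~\ref{OddnessPartitions} through the standard dictionary between $2^r$-hook removals and subtracting $2^r$ from a bead of a $\beta$-sequence, together with the observation that $n-2^r<2^r$ forces the resulting partition to already be the $2^r$-core. No gaps.
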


\begin{definition}
We define the function $D: \mathscr{P} \to \mathscr{P}$ such that for any partition $\lambda$ of $n$, we have $D(\lambda):=\mathrm{core}_{2^r}(\lambda)$. Moreover, we set $D((0)):=(0)$. Denote $D^0(\lambda):=\lambda$ and $D^{m}(\lambda):=D(D^{m-1}(\lambda))$ for any positive integer $m$. We define the \textit{oddness rank} of the partition $\lambda$ by
\[ \mathrm{OddRank}(\lambda) = \min \{m \in \mathbb Z_{\geq 0} \mid D^{m+1}(\lambda)=D^m(\lambda)\}.\] 
\end{definition}

\begin{corollary}
Let $\lambda$ be a partition of $n$. Let $n= \sum_{i=0}^r a_i 2^i$ for $a_i \in \{0,1 \}$ be the decomposition of $n$ in binary. Then $\lambda$ is odd if and only if $\mathrm{OddRank}(\lambda)= \sum_{i=0}^r a_i.$
\end{corollary}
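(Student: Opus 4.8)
The plan is to prove the statement by strong induction on $n=|\lambda|$, at each step stripping off the leading binary digit of $n$ by one application of $D$.

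For the base case $n=0$ we have $\lambda=(0)$, which is odd since $f_{(0)}=1$, while $\mathrm{OddRank}((0))=0$ equals the number of $1$'s in the binary expansion of $0$; so the equivalence holds. Now let $n\ge 1$ and let $r$ be such that $2^r\le n<2^{r+1}$. The crucial numerical input is that the $2^r$-weight $w$ of $\lambda$ satisfies $w\le 1$, since $|\mathrm{core}_{2^r}(\lambda)|=n-w\cdot 2^r\ge 0$ together with $n<2^{r+1}$ forces $w\le 1$. Hence $\lambda$ possesses a cell of hook length $2^r$ exactly when $\lambda$ is not $2^r$-core, i.e.\ exactly when $D(\lambda)\ne\lambda$, and in that case the weight bound gives $|D(\lambda)|=n-2^r$; for this I would use the $\beta$-number bookkeeping behind Corollary \ref{OddnessBetaNumbers}, or the elementary fact that a partition has a hook of length divisible by $2^r$ if and only if it has a hook of length precisely $2^r$. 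I then split into two cases. If $\lambda$ is $2^r$-core, then $D(\lambda)=\lambda$, so $\mathrm{OddRank}(\lambda)=0$; by Lemma \ref{OddnessPartitions} $\lambda$ has no cell of hook length $2^r$ and is therefore even; and since $n\ge 1$ its binary digit sum is at least $1$, so both sides of the claimed equivalence fail and it holds. If $\lambda$ is not $2^r$-core, set $\mu:=D(\lambda)=\mathrm{core}_{2^r}(\lambda)\in\mathscr{P}_{n-2^r}$. From the definition of $\mathrm{OddRank}$ together with $\mu\ne\lambda$ one obtains $\mathrm{OddRank}(\lambda)=1+\mathrm{OddRank}(\mu)$; since $2^r$ is the top binary digit of $n$, the binary digit sum of $n-2^r$ is that of $n$ minus $1$; and by Lemma \ref{OddnessPartitions}, $\lambda$ is odd if and only if $\mu$ is odd. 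Combining these with the inductive hypothesis applied to $\mu$ (legitimate since $|\mu|<n$) yields that $\lambda$ is odd if and only if $\mathrm{OddRank}(\mu)+1$ equals the binary digit sum of $n$, i.e.\ if and only if $\mathrm{OddRank}(\lambda)$ does, which completes the induction.

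I expect the only point requiring care to be the equivalence ``$\lambda$ has a cell of hook length $2^r$'' $\iff$ ``$\lambda$ is not $2^r$-core'' (and hence $|D(\lambda)|=n-2^r$), which genuinely uses the range $2^r\le n<2^{r+1}$ together with a standard hook-length fact; everything else is bookkeeping with the definition of $\mathrm{OddRank}$ and with binary expansions. One should also record in passing that the iteration $\lambda, D(\lambda), D^2(\lambda),\dots$ stabilizes, since $|D^m(\lambda)|$ strictly decreases until a fixed point (a $2^{r'}$-core, or $(0)$) is reached, so that $\mathrm{OddRank}(\lambda)$ is well defined.
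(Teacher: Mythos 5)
Your proof is correct and is essentially the intended derivation: the paper states this as an unproved corollary of Lemma \ref{OddnessPartitions}, and your induction on $n$ --- stripping the top binary digit via one application of $D$, using that the $2^r$-weight is at most $1$ so that $|D(\lambda)|=n-2^r$ and the recursion $\mathrm{OddRank}(\lambda)=1+\mathrm{OddRank}(D(\lambda))$ holds --- is exactly the argument the paper leaves to the reader. All the steps, including the handling of the $2^r$-core case and the bookkeeping with digit sums, check out.
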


We now define a distance function for the set of partitions, where the distance between two partitions is the minimum number of hooks that must be removed and added to transform one Young diagram into the other.

\begin{definition}
\begin{enumerate}[label=(\roman*)]
\item Let $m$ be a positive integer and let $\beta=(b_1,\dots, b_m), \beta'= (b_1',\dots, b_m')\in \mathbb Z^m$. Let $d_m: \mathbb Z^m \times \mathbb Z ^m \to \mathbb Z_{ \geq 0}$ be defined by
\[ 
d_m(\beta,\beta')= \# \{i \mid 1 \leq i \leq m, \ b_i \neq b_j' \ \text{for all} \ 1 \leq j \leq m\},
\]
i.e., $d_m(\beta,\beta')$ is the number of mismatches of $\beta$ and $\beta'$.
\item Let $\lambda, \mu$ be partitions. We choose the positive integer $m$ large enough such that there are sequences of $\beta$-numbers $\beta_{\lambda} \in \mathscr{B}(\lambda)$, $\beta_{\mu} \in \mathscr{B}(\mu)$ such that $\ell(\beta_{\lambda})=\ell(\beta_{\mu})=m$. We define the distance $d:\mathscr{P} \times \mathscr{P} \to \mathbb Z_{ \geq 0}$ by \[ d(\lambda, \mu):=d_m(\beta_{\lambda},\beta_{\mu}).\] 
\end{enumerate}
\end{definition}

It is clear that the distance between two partitions is well-defined and that $(\mathscr{P},d)$ is a metric space. Note that for instance if $\lambda$ is a partition of $n$ with $\mathrm{OddRank}(\lambda)>0$, then $d(\lambda, D(\lambda))=1$.

The following definition covers the \textit{odd branches} of a partition, which are the main objects we will work with in this paper.
\begin{definition}
Let $\lambda$ be a partition of $n$.
\begin{enumerate}[label=(\roman*)]
\item
Let $\mu$ be another partition of $n$. We say that $\mu$ is a branch of $\lambda$ if $\lambda \trianglelefteq \mu$ and $d(\lambda, \mu)=2$. Let $\mathscr{P}_{\lambda}$ be the set of branches of $\lambda$.
\item Let $\mu$ be a branch of $\lambda$ and let $\beta_{\lambda} \in \mathscr{B}(\lambda)$ (resp. $\beta_{\mu} \in \mathscr{B}(\mu)$) with 
\[
\beta_{\lambda}=(b_1,b_2,b_3, \dots, b_m), \
\beta_{\mu}=(b_1',b_2',b_3, \dots, b_m),
\]
such that $b_1>b_2$, $b_1'>b_2'$. We define
\[
h_{\lambda}^1(\mu):=b_1-b_2'=b_1'-b_2, \
h_{\lambda}^2(\mu):=b_2-b_2'=b_1'-b_1.
\]
Note that there are two cells $c_1=(k,j), c_2 =(l,j) \in [\lambda]$ with $k<l$ such that $h_{\lambda}(c_i)=h_{\lambda}^i(\mu)$ for $i=1,2$. Also, $(b_1,b_2',b_3, \dots, b_m) \in \mathscr{B}$ and the associated partition corresponds to removing the hook of $c_2$ from the Young diagram of $\lambda$.
\item We define \[ \mathscr{P}_{\lambda}^{\mathrm{odd}}:=   \{ \mu \in \mathscr{P}_{\lambda} \mid  \mu \ \textit{is odd}  \} \] to be the set of odd branches of $\lambda$. 
\end{enumerate}
\end{definition}

We are now ready to give the formula for the Gram determinant of a Specht module:
\begin{theorem}(cf. \cite{JamesMurphyFormula})
\label{James-Murphy Formula}
Let $\lambda$ be a partition of $n$. The Gram determinant of the canonical bilinear form with respect to the standard basis of the Specht module $S^{\lambda}$ is given by
\[
\det(\lambda) :=\prod_{\mu \in \mathscr{P}_{\lambda}} \left( \frac{h_{\lambda}^1(\mu) }{h_{\lambda}^2(\mu)} \right)^{\varepsilon_{\mu} f_{\mu}} \in \mathbb{Z},
\]
where $\varepsilon_{\mu} \in \{-1,1 \}$. For any prime $p$, we define the integer $a_{\lambda}^{(p)}$ such that \[ \det(\lambda)=m \cdot p^{a_{\lambda}^{(p)}},\] where $m$ is coprime to $p$.
\end{theorem}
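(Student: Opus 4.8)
The plan is to compute $\det(\lambda)$ one prime at a time, via the Jantzen filtration of the bilinear form, and then assemble the exact product from the resulting valuations. First I would realize the form on $S^{\lambda}$ as the restriction of the standard unimodular inner product on the permutation module $M^{\lambda}$ (the $\mathbb{Z}\mathfrak{S}_n$-module with the $\lambda$-tabloids as an orthonormal basis), in which $S^{\lambda}$ is the $\mathbb{Z}$-span of the standard polytabloids. Over $\mathbb{Q}$ one has the orthogonal decomposition $M^{\lambda} \otimes \mathbb{Q} \cong \bigoplus_{\mu \trianglerighteq \lambda}(S^{\mu} \otimes \mathbb{Q})^{\oplus K_{\mu\lambda}}$, with $K_{\mu\lambda}$ the number of semistandard tableaux of shape $\mu$ and content $\lambda$; in particular $S^{\lambda}$ occurs with multiplicity one and the form is nondegenerate on it, so the Gram matrix $G$ of the standard basis is invertible over $\mathbb{Q}$ and has integer entries, giving at once $\det(\lambda)=\det G \in \mathbb{Z}$. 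It therefore suffices to determine $|\det(\lambda)|$ by computing $\nu_p(\det G)$ for every prime $p$, and then to fix the global sign.

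For the valuation at a fixed prime $p$ I would invoke the elementary-divisor description of the Jantzen filtration: if $S^i = \{x \in S^{\lambda} : \langle x, S^{\lambda}\rangle \subseteq p^i\mathbb{Z}_{(p)}\}$ denotes the $i$-th term of the filtration of the reduction, then passing to the Smith normal form of $G$ yields the general identity
\[
\nu_p(\det(\lambda)) = a_{\lambda}^{(p)} = \sum_{i \geq 1} \dim_{\mathbb{F}_p}\bigl(\overline{S^i}\bigr).
\]
The crux is then a Jantzen--Schaper type sum formula, which expresses the total $\sum_{i\geq 1}[S^i]$ in the Grothendieck group as a signed sum over the partitions reachable from $\lambda$ by a single rim-hook move, i.e.\ over the branches $\mu \in \mathscr{P}_{\lambda}$: removing the rim hook of the lower cell $c_2$ (of length $h^2_{\lambda}(\mu)$) and re-wrapping it so as to raise the row of $c_1$ contributes the module $S^{\mu}$ weighted by $\nu_p\bigl(h^1_{\lambda}(\mu)/h^2_{\lambda}(\mu)\bigr)$ and a sign $\varepsilon_{\mu}$. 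Taking dimensions collapses this to the numerical identity $a^{(p)}_{\lambda} = \sum_{\mu \in \mathscr{P}_{\lambda}} \varepsilon_{\mu} f_{\mu}\, \nu_p\bigl(h^1_{\lambda}(\mu)/h^2_{\lambda}(\mu)\bigr)$, which explains the appearance of the dimensions $f_{\mu}=\dim(S^{\mu})$ as exponents.

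Since this holds for every prime $p$, it says precisely that $\det(\lambda)$ and the rational number $\prod_{\mu \in \mathscr{P}_{\lambda}}(h^1_{\lambda}(\mu)/h^2_{\lambda}(\mu))^{\varepsilon_{\mu} f_{\mu}}$ have equal $p$-adic valuations at all $p$, hence agree up to sign. As each $h^i_{\lambda}(\mu)$ is positive, the product is positive; and the form on $S^{\lambda}$ is the restriction of the \emph{positive-definite} standard inner product on $M^{\lambda}$, so $\det G > 0$. Both sides being positive, they are equal, and integrality has already been secured in the first step.

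The main obstacle is establishing the signed sum formula in exactly this shape: identifying the successive radical layers of the form with the branch modules $S^{\mu}$, proving that only distance-two, dominance-larger partitions occur (longer rim-hook moves must be shown to factor through iterated single moves and contribute nothing new), and pinning the weights down to the hook-length ratios $h^1_{\lambda}(\mu)/h^2_{\lambda}(\mu)$ with the correct signs $\varepsilon_{\mu}$. Controlling these signs -- which arise from the alternating nature of the Jantzen sum and the geometry of the rim hook being moved -- is the delicate point, and is where a careful analysis in terms of the sequences of $\beta$-numbers, rather than the Young diagram directly, is likely to be indispensable.
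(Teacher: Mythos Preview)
The paper does not prove this theorem at all: it is quoted from the literature with the attribution ``(cf.\ \cite{JamesMurphyFormula})'' and then used as a black box. There is therefore no ``paper's own proof'' to compare your proposal against.

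As for the proposal itself, the route you sketch---computing $\nu_p(\det\lambda)$ via the Jantzen filtration and a Schaper-type sum formula, then reassembling over all primes and fixing the sign by positive-definiteness---is a legitimate way to obtain the formula. One caution on logical order: in many standard treatments the Jantzen--Schaper sum formula for Specht modules is \emph{derived from} the James--Murphy determinant formula rather than the other way round, so if you pursue this line you must make sure you are invoking an independent proof of the sum formula (such proofs exist, e.g.\ via the generic Hecke algebra and specialization). The original James--Murphy argument is rather different in flavour: it works with Young's orthogonal (seminormal) form, expressing the Gram matrix in a basis where it is diagonal with entries given by explicit products of hook-length ratios over standard tableaux, and then collects these into the branch product you wrote down. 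That approach yields the signs $\varepsilon_\mu$ and the hook ratios $h^1_\lambda(\mu)/h^2_\lambda(\mu)$ directly from the combinatorics of the seminormal form, without passing through the Grothendieck group; your approach trades that explicit computation for the machinery of the sum formula.
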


\begin{remark}
\label{RemarkDeterminantFormula}
 We denote $(\mathbb{Q}^{\times})^2:= \{ x^2 \mid x \in \mathbb Q^{\times}  \}$ to be the set of invertible rational squares. Let $\lambda$ be an even partition of $n$ and let $\chi_{\lambda}$ be the character corresponding to $\lambda$. The orthogonal determinant $\det(\chi_{\lambda}) \in \mathbb{Q}^{\times}/(\mathbb{Q}^{\times})^2$ in the sense of \cite{NebeOrtDet} is defined to be the rational square class of the Gram determinant of any $\rho(\mathfrak{S}_n)$-invariant, non-degenerate, symmetric bilinear form of any rational representation $\rho$ affording the character $\chi_{\lambda}$. The obvious choice is the canonical bilinear form for the Specht module $S^{\lambda}$, and we arrive at
  \[
   \det(\chi_{\lambda})= \det(\lambda) \cdot (\mathbb{Q}^{\times})^2= \prod_{\mu \in \mathscr{P}_{\lambda}^{\mathrm{odd}}} \frac{h_{\lambda}^1(\mu) }{h_{\lambda}^2(\mu)}  \cdot (\mathbb{Q}^{\times})^2.
   \]
\end{remark}

\begin{example}
\label{Example21111331}
Let $\lambda=(2,1^{(5)})$ and let $\mu=(3,3,1)$ be two partitions of $7$. Clearly, $\lambda \trianglelefteq \mu$ in the dominance order.

We choose $\beta_{\lambda}:=(7,5,4,3,2,1)$ and $\beta_{\mu}=(8,7,4,2,1,0)$ for the sequences of $\beta$-numbers for $\lambda$ and $\mu$, respectively. Comparing the two sequences, we see that there are two mismatches, so $d(\lambda, \mu)=d_6(\beta_{\lambda}, \beta_{\mu})=2$. So $\mu$ is in fact a branch of $\lambda$; we calculate that $h_{\lambda}^1(\mu)=5$ and $h_{\lambda}^2(\mu)=3$.

It is easily seen that $\mu$ is odd; indeed, $D(\mu)=(3)$ and $f_{(3)}=1$, so the oddness follows by Lemma \ref{OddnessPartitions}. In particular, $\mu$ is an odd branch of $\lambda$. With the same reasoning we see that $\lambda$ is even, since $D(\lambda)=(2,1)$ and $(2,1)$ is $2$-core.

We further observe that $\mathscr{P}_{\lambda}^{\mathrm{odd}}= \{\mu, (7), (5,1,1), (3,1^{(4)}) \}$ and thus arrive at \[\det(\chi_{\lambda})= \frac{5}{3} \cdot \frac{7}{5} \cdot \frac{7}{3} \cdot \frac{7}{1} \cdot (\mathbb{Q}^{\times})^2= 7 \cdot (\mathbb{Q}^{\times})^2.\]

\end{example}

 \begin{figure}[ht]
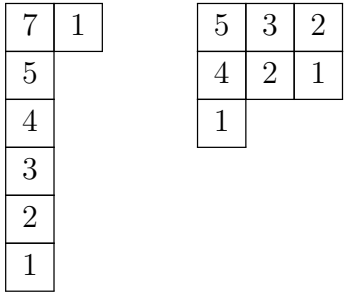

     \centering
   \begin{ytableau}
               7 & 1 \\
               5 \\
               4 \\
               3 \\
               2 \\
               1
            \end{ytableau} \ \ \ \  \ \ \ \ \begin{ytableau}
               5 & 3 & 2 \\
               4 & 2 & 1 \\
               1
            \end{ytableau}
     \caption{Hook diagrams of $(2,1^{(5)})$ and $(3,3,1)$.}
       \label{Figure2}
 \end{figure}

\section{Proof of Theorem \ref{Main-Thm-Symmetric}}
\label{Symmetric Parker}
We fix a positive integer $n$ and an even partition $\lambda$. Let $r$ be the integer such that $2^r \leq n < 2^{r+1}$. Our goal is to work towards a proof of Theorem \ref{Main-Thm-Symmetric}. The following definition will be useful:

\begin{definition}
Let $M \subseteq \mathscr{P}_{\lambda}^{\mathrm{odd}}$ be any subset. We say that $M$ is odd if there is a square-free odd integer $d$ such that \[ \prod_{\mu \in  M}  \frac{h_{\lambda}^1(\mu) }{h_{\lambda}^2(\mu)} \cdot  (\mathbb{Q}^{\times})^2=d \cdot  (\mathbb{Q}^{\times})^2. \]
\end{definition}

\begin{remark}
\label{RemarkEquivalentThm}
Recall that $\det(\lambda)=m \cdot 2^{a_{\lambda}^{(2)}}$ for some odd integer $m$. Now, the integer $a_{\lambda}^{(2)}$ being even is equivalent to $2^{a_{\lambda}^{(2)}}$ being a square. Together with Remark \ref{RemarkDeterminantFormula}, we conclude that Theorem \ref{Main-Thm-Symmetric} holds true if it can be shown that $\mathscr{P}_{\lambda}^{\mathrm{odd}}$ is odd, for any even partition $\lambda$. 
\end{remark}

The idea now is to find a suitable set partition \[\mathscr{P}_{\lambda}^{\mathrm{odd}}=M_1 \cup M_2 \cup \dots \cup M_k \] such that each of the $M_i$ is odd. 
\begin{definition}
We set an equivalence relation on the set of odd branches of $\lambda$ by saying that $\mu_1 \sim \mu_2$ for partitions $\mu_1, \mu_2 \in \mathscr{P}_{\lambda}^{\mathrm{odd}}$ if and only if $D(\mu_1)=D(\mu_2)$. For any odd branch $\mu$ of $\lambda$ we denote with $(\mu)_{\sim}$ the equivalence class of $\mu$. We may also write $D((\mu)_{\sim}):=D(\mu)$.
\end{definition}

\begin{example}
\label{Example211111EquivalenceClasses}
Let again $\lambda=(2,1^{(5)})$ as in Example \ref{Example21111331}. Recall that are four odd branches of $\lambda$. We calculate that \[ D((3,3,1))=D((7))=D((3,1^{(4)}))=(3) \] and $D((5,1,1))=(1,1,1)$, so $\mathscr{P}_{\lambda}^{\mathrm{odd}}$ decomposes into two equivalence classes.
\end{example}

By definition, for any odd branch $\mu$ of $\lambda$, we have $d(\lambda, \mu)=2$. Since $d$ is a distance function, it follows that $d(\lambda, D(\mu)) \in \{1,2,3 \}$. We will first observe that the case $d(\lambda, D(\mu))=1$ cannot occur.

\begin{lemma}
\label{Distance 1}
Let $\mu$ be an odd branch of $\lambda$. Then $d(\lambda, D(\mu)) \in \{ 2,3 \}$. Moreover, $2^r \not\in \{h^1_{\lambda}(\mu), h^2_{\lambda}(\mu)   \}$.
\end{lemma}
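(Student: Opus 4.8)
Let $\mu$ be an odd branch of $\lambda$. The plan is to work with a common length-$m$ beta-number description. Choose $\beta_{\lambda} = (b_1, b_2, b_3, \dots, b_m) \in \mathscr{B}(\lambda)$ and $\beta_{\mu} = (b_1', b_2', b_3, \dots, b_m) \in \mathscr{B}(\mu)$ as in the definition of a branch, with $b_1 > b_2$, $b_1' > b_2'$, and recall the relations $b_1 - b_2' = b_1' - b_2 = h_{\lambda}^1(\mu)$ and $b_2 - b_2' = b_1' - b_1 = h_{\lambda}^2(\mu) > 0$. In particular $\{b_1', b_2'\}$ and $\{b_1, b_2\}$ are the two symmetric pairs, and the entries $b_3, \dots, b_m$ are shared. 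The hooks of $\lambda$ that get modified all lie in a single column $j$, and the two changed beta-numbers correspond to the rows $k < l$ with $h_{\lambda}(c_i) = h_{\lambda}^i(\mu)$.

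**Locating the $2^r$-hook and ruling out distance $1$.** Since $\mu$ is odd and $|\mu| = n$, by Corollary \ref{OddnessBetaNumbers} there is a unique index $s$ with $\beta_{\mu} - 2^r e_s \in \mathscr{B}_{n - 2^r}$, and then $\beta_{\mu} - 2^r e_s \in \mathscr{B}(D(\mu))$. First I would argue that $s \notin \{3, \dots, m\}$: if $s$ pointed at a shared entry $b_s$, then $\beta_{\lambda} - 2^r e_s$ would differ from $\beta_{\mu} - 2^r e_s$ in exactly the same two coordinates (positions $1$ and $2$) in which $\beta_{\lambda}$ and $\beta_{\mu}$ differ, so $\beta_{\lambda} - 2^r e_s$ would still be a valid sequence of $\beta$-numbers, hence a $2^r$-hook exists in $\lambda$ — and a short parity argument via Lemma \ref{OddnessPartitions} (using that $\lambda$ is even but $\mu$ is odd and they have the same $2^r$-core structure after this removal) forces a contradiction. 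So $s \in \{1, 2\}$, i.e. the unique $2^r$-hook of $\mu$ sits in one of the two "moved" rows. This already gives $2^r \in \{b_1' - (\text{something in } \beta_{\mu}), \dots\}$, but more importantly it lets me control $D(\mu)$: $\beta_{D(\mu)}$ is obtained from $\beta_{\mu}$ by subtracting $2^r$ from $b_1'$ or from $b_2'$, changing exactly one coordinate. Comparing this length-$m$ sequence against $\beta_{\lambda}$, which differs from $\beta_{\mu}$ in exactly coordinates $1$ and $2$: the mismatch count $d(\lambda, D(\mu)) = d_m(\beta_{\lambda}, \beta_{D(\mu)})$ can only be $2$ or $3$ — it is $\geq 2$ because $d(\lambda,\mu)=2$ and subtracting $2^r e_s$ with $s\in\{1,2\}$ changes only one of the two disagreeing coordinates while possibly creating at most one new disagreement, and it cannot drop to $\leq 1$ since that would again manufacture a $2^r$-hook in $\lambda$ (or collapse $\lambda$ and $D(\mu)$ too close), contradicting that $\lambda$ is even of size $n$. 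I expect the careful bookkeeping of \emph{why the mismatch count cannot be $1$} — translating "$d(\lambda, D(\mu)) = 1$" into "$\lambda$ has a $2^r$-hook" and then invoking Lemma \ref{OddnessPartitions} to contradict the evenness of $\lambda$ — to be the main obstacle, since one must check that no coincidence among the beta-numbers lets a third disagreement silently cancel.

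**The statement $2^r \notin \{h_{\lambda}^1(\mu), h_{\lambda}^2(\mu)\}$.** Suppose for contradiction that $h_{\lambda}^i(\mu) = 2^r$ for some $i \in \{1,2\}$; this says $\lambda$ itself has a cell of hook length $2^r$ (namely $c_i$), so by Lemma \ref{OddnessPartitions}, $\lambda$ being even forces $\mathrm{core}_{2^r}(\lambda) = D(\lambda)$ to be even. On the other hand, I would show that when one of the two hook-length differences equals $2^r$, the $2^r$-hook removal from $\mu$ and the $2^r$-hook removal from $\lambda$ are "parallel": both act in column $j$, and a direct beta-number computation shows $D(\mu)$ is then a branch (or equals, or is within distance $2$) of $D(\lambda)$ in a way that transfers oddness of $\mu$ to oddness of $D(\lambda)$'s relevant piece — contradicting that $D(\lambda)$ is even. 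Concretely: subtracting $2^r$ from $b_1'$ (say $h_{\lambda}^1(\mu) = 2^r$ means $b_1 - b_2' = 2^r$, i.e. $b_1' = b_2 + 2^r$, so $b_1' - 2^r = b_2$, which already appears in $\beta_{\lambda}$) collapses a disagreement and forces a coincidence that reveals the contradiction with the parity of $\mu$ versus $D(\lambda)$ via Lemma \ref{OddnessPartitions} applied one level down. The delicate point here is handling both cases $i = 1$ and $i = 2$ and making sure the beta-number that one subtracts $2^r$ from is indeed the one flagged by uniqueness in Corollary \ref{OddnessBetaNumbers}; I would dispatch this by noting that the index $s$ from the previous paragraph must be the one whose corresponding row has hook length divisible by $2^r$, and checking that $h_{\lambda}^i(\mu) = 2^r$ pins down $s$ on the wrong side to be consistent with $\mu$ odd and $\lambda$ even simultaneously.
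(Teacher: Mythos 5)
Your toolkit (beta-numbers, the uniqueness in Corollary \ref{OddnessBetaNumbers}, parity via Lemma \ref{OddnessPartitions}) is the same as the paper's, but two load-bearing inferences in your sketch are wrong. The first is the claim that the unique index $s$ with $\beta_{\mu}-2^r e_s \in \mathscr{B}_{n-2^r}$ must lie in $\{1,2\}$. This is false: the $2^r$-hook of $\mu$ can sit at a shared position $s\geq 3$, and this is precisely the case $d(\lambda,D(\mu))=3$ treated in Lemma \ref{Distance 3}. It genuinely occurs; for $n=11$, $\lambda=(6,3,2)$, $\mu=(6,4,1)$ one may take $\beta_{\lambda}=(8,4,2)$ and $\beta_{\mu}=(8,5,1)$, and the unique $8$-hook of $\mu$ (and of $\lambda$) corresponds to the shared entry $8$. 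Your purported contradiction in that case evaporates because $D(\lambda)$ and $D(\mu)$ are then two \emph{different} partitions, so there is no ``same $2^r$-core structure'' to produce a parity clash.

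The second, more fundamental, problem is the recurring inference ``this manufactures a $2^r$-hook in $\lambda$, contradicting that $\lambda$ is even.'' That is not a contradiction: Lemma \ref{OddnessPartitions} says $\lambda$ is odd if and only if it has a $2^r$-hook \emph{and} $\mathrm{core}_{2^r}(\lambda)$ is odd, so an even partition may well have a $2^r$-hook (again, $(6,3,2)$ with $n=11$). The actual content of both halves of the lemma is that the forbidden configurations force the exact equality $D(\lambda)=D(\mu)$: if $d(\lambda,D(\mu))=1$, the single differing entry must drop by exactly $2^r$ for size reasons, so $\beta_{\lambda}-2^r e_1\in\mathscr{B}_{n-2^r}$ and Corollary \ref{OddnessBetaNumbers} forces $D(\lambda)=D(\mu)$; and if $h^i_{\lambda}(\mu)=2^r$, then (after renumbering) $b_1'=b_1+2^r$ and $b_2'=b_2-2^r$, so $\beta_{\mu}-2^r e_1$ and $\beta_{\lambda}-2^r e_2$ are sequences of $\beta$-numbers for one and the same partition. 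Only at that point does Lemma \ref{OddnessPartitions} bite: $D(\lambda)=D(\mu)$ is odd because $\mu$ is, and $\lambda$ has a $2^r$-hook with odd core, hence $\lambda$ is odd --- the desired contradiction. Your treatment of the second half does spot the key coincidence $b_1'-2^r=b_2$, but stops at ``$D(\mu)$ is a branch (or equals, or is within distance $2$) of $D(\lambda)$''; anything short of equality proves nothing, since odd partitions routinely sit at distance $2$ or $3$ from even ones (that is what the rest of Section \ref{Symmetric Parker} is about). As written, the proposal does not establish either assertion of the lemma.
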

\begin{proof}
Assume for a contradiction that  $d(\lambda, D(\mu))=1$. Choose sequences of $\beta$-numbers $\beta_{\lambda}:=(b_1,b_2, \dots, b_m)$ and $\beta_{D(\mu)}:=(b_1',b_2, \dots, b_m)$ for $\lambda$ and $D(\mu)$, respectively. Since $\lambda$ is a partition of $n$ and $D(\mu)$ is a partition of $n-2^r$, it is clear that $b_1'=b_1-2^r$. By Corollary \ref{OddnessBetaNumbers} it now follows that $D(\lambda)=D(\mu)$. But now by Lemma \ref{OddnessPartitions} the parities of $f_{\lambda}$ and $f_{\mu}$ have to be equal, which is a contradiction.

For the second statement we will also be using contradiction, so let $\beta_{\mu}:=(b_1', b_2', b_3, \dots, b_m)$ be a sequence of $\beta$-numbers for $\mu$ and assume that either $h^1_{\lambda}(\mu)$ or $h^2_{\lambda}(\mu)$ is equal to $2^r$. This implies, after maybe a renumbering, that $b_1'=b_1+2^r$ and $b_2'=b_2-2^r$. Then both $\beta_{\lambda}-2^r e_2$ and $\beta_{\mu}-2^r e_1$ are sequences of $\beta$-numbers for $D(\mu)=D(\lambda)$, which is again a contradiction to Lemma \ref{OddnessPartitions}.
\end{proof}

There are thus two cases remaining for an odd branch $\mu$ of $\lambda$; either $d(\lambda, D(\mu))$ is equal to $2$ or $3$. The case of the distance being $3$ is easily understood:
\begin{lemma}
\label{Distance 3}
Let $\mu$ be an odd branch of $\lambda$ with $d(\lambda, D(\mu))=3$. Then $|(\mu)_{\sim}|=1$, $\mathrm{OddRank}(\lambda)>0$ and $D(\mu)$ is an odd branch of  $D(\lambda)$. Moreover, $h^{i}_{\lambda}(\mu)=h^{i}_{D(\lambda)}(D(\mu))$ for $i=1,2$.
\end{lemma}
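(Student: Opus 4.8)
The plan is to work entirely on the level of sequences of $\beta$-numbers. Fix $\beta_{\lambda}=(b_1,b_2,b_3,\dots,b_m)$ with $b_1>b_2$ and, since $\mu$ is a branch, $\beta_{\mu}=(b_1',b_2',b_3,\dots,b_m)$ with $b_1'>b_2'$ and $b_1'+b_2'=b_1+b_2$, $b_1'>b_1>b_2>b_2'$ (the standard picture of a branch, with $h_\lambda^1(\mu)=b_1-b_2'$, $h_\lambda^2(\mu)=b_2-b_2'$). Since $\mu$ is odd, there is an index $j$ with $\beta_\mu - 2^r e_j \in \mathscr B_{n-2^r}$, and $\mathrm{Part}(\beta_\mu-2^re_j)=D(\mu)$. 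First I would locate $j$: by Lemma \ref{Distance 1} we know $h^1_\lambda(\mu),h^2_\lambda(\mu)\neq 2^r$, so subtracting $2^r$ from $b_1'$ would land on $b_1$ or $b_2$ (forbidden, or would only give a mismatch count inconsistent with $d(\lambda,D(\mu))=3$), and subtracting $2^r$ from $b_2'$ similarly needs checking; the point is to show that in the distance-$3$ case the removed $2^r$-hook must come from one of the \emph{common} coordinates $b_3,\dots,b_m$, say from $b_k$ with $k\ge 3$. Here is where the distance hypothesis does the work: $d(\lambda,\mu)=2$ accounts for the two coordinates $\{b_1,b_2\}\to\{b_1',b_2'\}$, and $d(\lambda,D(\mu))=3$ forces exactly one more coordinate to move, which can only be the one from which the $2^r$-hook is removed, and it cannot be $b_1'$ or $b_2'$ (that would either recreate an old value of $\beta_\lambda$, lowering the distance, or hit a forbidden value by Lemma \ref{Distance 1}).

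Once $j=k\ge 3$ is established, everything follows. We have $\beta_\mu-2^re_k=(b_1',b_2',b_3,\dots,b_k-2^r,\dots,b_m)$, and since $k\ge 3$ the value $b_k-2^r$ is a legitimate $\beta$-number entry also when attached to $\beta_\lambda$: set $\beta_{D(\lambda)}:=\beta_\lambda-2^re_k=(b_1,b_2,b_3,\dots,b_k-2^r,\dots,b_m)$. I must check $\beta_{D(\lambda)}\in\mathscr B_{n-2^r}$, i.e.\ $b_k-2^r$ is non-negative and distinct from the other entries; non-negativity and distinctness from $b_3,\dots,b_m$ are inherited from the corresponding fact for $\beta_\mu-2^re_k$, and distinctness from $b_1,b_2$ follows because $b_1,b_2\in\{b_1',b_2'\}{}^c$ lie strictly between $b_2'$ and $b_1'$ while $b_k-2^r$ is already known to differ from $b_1',b_2'$. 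By Corollary \ref{OddnessBetaNumbers} this index $k$ is \emph{the} unique one for $\beta_\lambda$, so $D(\lambda)=\mathrm{Part}(\beta_{D(\lambda)})$ and $\mathrm{OddRank}(\lambda)>0$ (there is a cell of hook length $2^r$ in $[\lambda]$). Comparing $\beta_{D(\lambda)}$ and $\beta_{D(\mu)}=\beta_\mu-2^re_k$ we see they differ in exactly the two coordinates $\{b_1,b_2\}$ versus $\{b_1',b_2'\}$, with the \emph{same} differences as before: $d(D(\lambda),D(\mu))=2$, and since $D$ preserves dominance on these coordinates (the first-column $\beta$-data outside position $k$ is untouched) $D(\lambda)\trianglelefteq D(\mu)$, so $D(\mu)$ is a branch of $D(\lambda)$; moreover $h^i_{D(\lambda)}(D(\mu))=h^i_\lambda(\mu)$ for $i=1,2$ because these are computed from $b_1,b_2,b_1',b_2'$, which are unchanged. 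Since $\mu$ is odd, $D(\mu)$ is odd (as $D$ preserves parity), so $D(\mu)$ is an \emph{odd} branch of $D(\lambda)$.

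Finally, $|(\mu)_\sim|=1$: suppose $\mu'\in\mathscr P_\lambda^{\mathrm{odd}}$ with $D(\mu')=D(\mu)$. Then $d(\lambda,\mu')=2$ and $d(\mu',D(\mu'))=1$, so $d(\lambda,D(\mu'))\le 3$; but also $d(\lambda,D(\mu'))=d(\lambda,D(\mu))=3$, which forces the triangle $\lambda,\mu',D(\mu')$ to be "tight", i.e.\ the $2^r$-hook removed from $\mu'$ must again come from a common coordinate of $\beta_\lambda$ and $\beta_{\mu'}$, and by the uniqueness in Corollary \ref{OddnessBetaNumbers} that coordinate is the \emph{same} position $k$; undoing it, $\beta_{\mu'}+2^re_k$ and $\beta_\mu+2^re_k$ are both sequences of $\beta$-numbers for branches of $\lambda$ with the same $D$-image and the same two moved coordinates, forcing $\mu'=\mu$.

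The main obstacle I anticipate is the bookkeeping in the very first step — rigorously excluding, via the metric, that the $2^r$-hook of $\mu$ is removed from the coordinate $b_1'$ or $b_2'$ (as opposed to a common coordinate). This is the crux where the hypothesis $d(\lambda,D(\mu))=3$ (rather than $2$) is used, and it has to be argued carefully against the constraints imposed by Lemma \ref{Distance 1} and by the strict inequalities $b_1'>b_1>b_2>b_2'$; the rest is essentially transport of $\beta$-number data.
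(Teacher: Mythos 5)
Your proposal follows essentially the same route as the paper's proof: fix matched $\beta$-sequences, use $d(\lambda,D(\mu))=3$ together with Lemma \ref{Distance 1} to force the removed $2^r$-hook into a common coordinate $b_k$ with $k\geq 3$, and then read off $\mathrm{OddRank}(\lambda)>0$, the branch statement and the equality of hook lengths by comparing $\beta_{\lambda}-2^re_k$ with $\beta_{\mu}-2^re_k$; the uniqueness coming from Corollary \ref{OddnessBetaNumbers} likewise underlies the paper's count $|(\mu)_{\sim}|=1$. One sub-step is justified incorrectly, although the claim itself is true: to get $\beta_{\lambda}-2^re_k\in\mathscr{B}_{n-2^r}$ you must rule out $b_k-2^r\in\{b_1,b_2\}$, and your argument (``$b_k-2^r$ is known to differ from $b_1'$ and $b_2'$, while $b_1,b_2$ lie strictly between $b_2'$ and $b_1'$'') is a non sequitur --- a number distinct from $b_1'$ and $b_2'$ can perfectly well equal $b_1$ or $b_2$. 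The correct reason is the distance hypothesis itself, which is what the paper invokes: if $b_k-2^r$ equaled $b_1$ or $b_2$, that entry of $\beta_{\lambda}$ would be matched in $\beta_{D(\mu)}$ and one would get $d(\lambda,D(\mu))\leq 2$. With that one-line repair, and with the terse final uniqueness step spelled out (for a second representative $\mu'$ one first needs $d(\lambda,D(\mu'))=d(\lambda,D(\mu))=3$ to conclude that its $2^r$-hook is also removed from a common coordinate, and only then does Corollary \ref{OddnessBetaNumbers} applied to $\beta_{\lambda}$ force that coordinate to be the same $k$, whence $\mu'=\mu$), the proof is complete and matches the paper's.
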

\begin{proof}
Let $\beta_{\lambda}:=(b_1, b_2, b_3, \dots, b_m) \in \mathscr{B}(\lambda)$. After reordering we can assume that there are  $h_1, h_2, h_3 \in \mathbb Z \backslash \{ 0 \}$ such that \[\beta_{D(\mu)}:= (b_1-h_1, b_2-h_2, b_3-h_3, \dots, b_m) \in \mathscr{B}(D(\mu)).\] For any partition $\kappa \in (\mu)_{\sim}$, there has to be an index $j$ such that $\beta_{D(\mu)}+2^r e_{j} \in \mathscr{B}(\kappa).$ Since $d(\lambda, \kappa)=2$, it follows that $j \in \{ 1, 2, 3 \}$ and that $h_j=2^r$, assume without loss of generality that $j=3$. By Lemma \ref{Distance 1} we know that $2^r \not\in \{ |h_1|, |h_2| \}$, so there can only be one such index and $\kappa=\mu$.

Since $d(\lambda, D(\mu))=3$, the entry $b_3-h_3=b_3-2^r$ cannot appear in $\beta_{\lambda}$. So  
\[\beta_{\lambda}-2^r e_3= (b_1,b_2,b_3-2^r, \dots, b_m) \in \mathscr{B}_{n-2^r}.\] By Corollary \ref{OddnessBetaNumbers} it follows that $\beta_{\lambda}-2^r e_3 \in \mathscr{B}(D(\lambda))$.

The final statement is easy to see: For the calculation of $h^{i}_{\lambda}(\mu)$ for $i=1,2$, we compare the sequences
\begin{align*}
(b_1,b_2,& b_3, \dots, b_m), \\
(b_1-h_1,b_2-h_2,& b_3, \dots, b_m).
\end{align*}
For the calculation of $h^{i}_{D(\lambda)}(D(\mu))$ for $i=1,2$, we compare the sequences
\begin{align*}
(b_1,b_2,& b_3-2^r, \dots, b_m), \\
(b_1-h_1,b_2-h_2,& b_3-2^r, \dots, b_m).
\end{align*}
So the numbers clearly coincide. This concludes the proof.
\end{proof}

\begin{example}
Let $\lambda=(6,3,2)$ and $\mu=(6,4,1)$ be an odd branch of $\lambda$. Then $d(\lambda, D(\mu))=3$ with $D(\lambda)=(2,1)$ and $D(\mu)=(3)$. Moreover, we calculate that $h^1_{\lambda}(\mu)=h^1_{D(\lambda)}(D(\mu))=3$ and $h^2_{\lambda}(\mu)=h^2_{D(\lambda)}(D(\mu))=1$.
\end{example}

\begin{figure}[ht]
     \centering
   \begin{ytableau}
               *(lightgray) 8 & *(lightgray) 7 & *(lightgray) 5 & *(lightgray) 3 & *(lightgray) 2 & *(lightgray) 1 \\
               *(lightgray) 4 & 3 & 1 \\
              *(lightgray)  2 & 1
            \end{ytableau} \  \ \ \ \  \ \ \ \ \begin{ytableau}
               *(lightgray) 8 & *(lightgray) 6 & *(lightgray) 5 & *(lightgray) 4 & *(lightgray) 2 & *(lightgray) 1 \\
               *(lightgray) 5 & 3 & 2 & 1 \\
              *(lightgray)  1 
            \end{ytableau}
     \caption{Hook diagrams of $(6,3,2)$ and $(6,4,1)$.}
       \label{Figure3}
 \end{figure}

We are left with the case that $d(\lambda, D(\mu))=2$ for an odd branch $\mu$ of $\lambda$. This case requires some work, as we have to go over the possible ways the signs of $2^r-h^i_{\lambda}(\mu)$ for $i=1,2$ can distribute. First we gather some general statements.

\begin{lemma}
\label{Distance 2}
Let $\mu$ be an odd branch of $\lambda$ with $d(\lambda, D(\mu))=2$. The following hold:
\begin{enumerate}[label=(\roman*)]
  \item $|(\mu)_{\sim}| \leq 4$.
  \item There is a partition $\kappa \in (\mu)_{\sim} $, which we call a distinguished representative of $(\mu)_{\sim}$, such that the following holds: Let \begin{align*}
\beta_{\lambda}:=&(b_1,b_2, b_3, \dots, b_m) \in \mathscr{B}(\lambda), \\
\beta_{\kappa}:=&(b_1',b_2', b_3, \dots, b_m) \in \mathscr{B}(\kappa)
\end{align*}
 such that $b_1>b_2$ and $b_1'>b_2'$.
  
Then \[\beta_{\kappa}-2^r e_1 = (b_1'-2^r,b_2', b_3, \dots, b_m) \in \mathscr{B}(D(\mu)).\]
\end{enumerate}
\end{lemma}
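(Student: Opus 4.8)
The plan is to work entirely with $\beta$-number sequences of a fixed common length $m$, exploiting that $\mu$ being a branch of $\lambda$ with $d(\lambda,D(\mu))=2$ forces the ``active'' coordinates to lie in a set of size at most four. Concretely, fix $\beta_\lambda=(b_1,b_2,b_3,\dots,b_m)\in\mathscr B(\lambda)$ with $b_1>b_2$. Since $\mu$ is a branch with $d(\lambda,\mu)=2$, after reordering there are two indices—which by the definition of branch we may take to be $1$ and $2$—at which $\beta_\mu$ differs from $\beta_\lambda$; write $\beta_\mu=(b_1',b_2',b_3,\dots,b_m)$. Since $d(D(\mu),\lambda)=2$, pick $\beta_{D(\mu)}$ of length $m$ agreeing with $\beta_\lambda$ outside a two-element index set $I$. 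First I would argue that $I\subseteq\{1,2\}$: indeed $\beta_{D(\mu)}$ is obtained from $\beta_\mu$ by subtracting $2^r e_j$ for a single $j$ (this is Corollary \ref{OddnessBetaNumbers} applied to $\mu$, using that $\mu$ is odd), so $\beta_{D(\mu)}$ and $\beta_\mu$ differ only in coordinate $j$; combined with the fact that $\beta_\mu$ and $\beta_\lambda$ differ only in coordinates $1,2$, the mismatch set $I$ of $\beta_{D(\mu)}$ against $\beta_\lambda$ is contained in $\{1,2,j\}$, and since $|I|=2$ we get $I\subseteq\{1,2,j\}$; if $j\notin\{1,2\}$ then $d(\lambda,D(\mu))$ would be $3$ (coordinates $1$, $2$ and $j$ all mismatching unless $b'_i=b_i$ for some $i$, which is impossible as $\mu\neq\lambda$ in both coordinates), contradiction. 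Hence $j\in\{1,2\}$ and $I\subseteq\{1,2\}$.

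With $j\in\{1,2\}$, after swapping the roles of indices $1$ and $2$ if necessary I may assume $j=1$, i.e.\ $\beta_{D(\mu)}=\beta_\mu-2^r e_1=(b_1'-2^r,b_2',b_3,\dots,b_m)$; this is exactly the distinguished representative property once we know $\kappa=\mu$ works, but in general the representative producing this clean picture may be a different member of $(\mu)_\sim$, so I keep the notation $\kappa$ and will identify it below. For the cardinality bound (i): every $\kappa'\in(\mu)_\sim$ satisfies $D(\kappa')=D(\mu)$ and $d(\lambda,\kappa')=2$, so there is an index $i(\kappa')$ with $\beta_{D(\mu)}+2^r e_{i(\kappa')}\in\mathscr B(\kappa')$, and $\kappa'$ is determined by $i(\kappa')$ together with which pair of coordinates $\beta_{\kappa'}$ differs from $\beta_\lambda$ in. Since $d(\lambda,\kappa')=2$ and $\beta_{D(\mu)}$ already differs from $\beta_\lambda$ only in coordinates $\{1,2\}$, adding $2^r e_{i}$ and still landing within distance $2$ of $\lambda$ forces $i\in\{1,2\}\cup\{$the at most one index $k$ with $b_k-2^r$ equal to some entry that was deleted$\}$—a careful bookkeeping shows there are at most two admissible values of $i$, and for each the mismatch-pair is then pinned down, giving $|(\mu)_\sim|\le 4$. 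I would organize this as: the coordinate $i$ must satisfy either $(\beta_{D(\mu)}+2^r e_i)_i\in\{b_1,b_2\}$ (reusing a slot) or $i\in\{1,2\}$ directly, and count cases.

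For (ii), the distinguished representative $\kappa$ is the one whose $\beta$-sequence, in the normalization above, is obtained by putting the $2^r$ back in coordinate $1$ with $b_1'>b_2'$ maintained; concretely set $\beta_\kappa=(b_1'-2^r+2^r,\dots)$—tautologically this recovers $\beta_\mu$ if $\mu$ itself already has $b_1'>b_2'$, and otherwise $\kappa$ is the partition $\mathrm{Part}$ of the sequence obtained by adding $2^r$ to coordinate $1$ of $\beta_{D(\mu)}$ after first renaming coordinates so that the result is sorted with a strict inequality in the top two slots. The point is that $D(\kappa)=\mathrm{Part}(\beta_{D(\mu)})=D(\mu)$ by construction, so $\kappa\in(\mu)_\sim$, and the displayed identity $\beta_\kappa-2^r e_1\in\mathscr B(D(\mu))$ holds by definition of $\kappa$. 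The main obstacle I anticipate is the case analysis in (i) and the verification that $\kappa$ as just described genuinely lies in $\mathscr P_\lambda$ (i.e.\ $\lambda\trianglelefteq\kappa$ and $d(\lambda,\kappa)=2$) rather than merely being an abstract partition with the right image under $D$: one must check that re-adding the $2^r$-hook to $D(\mu)$ in the prescribed row cannot collide with an existing $\beta$-number of $\lambda$ in a way that drops the distance to $1$ (ruled out by Lemma \ref{Distance 1}) or raises it to $3$. I would handle this by tracking the two deleted $\beta$-values $b_2,b_1$ (the entries of $\beta_\lambda$ absent from $\beta_\mu$) and the two inserted values $b_1',b_2'$, noting $b_1'-2^r=b_2'-h$ for the relevant hook length $h=h^2_\lambda(\mu)$, and showing the arithmetic constraints from $d(\lambda,D(\mu))=2$ leave exactly the claimed structure.
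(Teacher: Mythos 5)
Your argument breaks at the step where you conclude $j\in\{1,2\}$, i.e.\ that the coordinate at which $2^r$ is subtracted from $\beta_\mu$ must be one of the two coordinates where $\beta_\mu$ mismatches $\beta_\lambda$. The distance $d_m$ counts entries of one sequence absent from the other, not coordinatewise differences, so when $j\geq 3$ the new entry $b_j-2^r$ may coincide with one of the two entries $b_1,b_2$ of $\beta_\lambda$ that $\beta_\mu$ was already missing; then $d(\lambda,D(\mu))=2$ even though the hook was removed from a matched coordinate. This actually happens: for $\lambda=(2,1^{(5)})$ and $\mu=(3,3,1)$ one has $\beta_\lambda=(7,5,4,3,2,1)$ and $\beta_\mu=(8,7,4,2,1,0)$, the mismatched entries are $\{7,5\}$ versus $\{8,0\}$, yet the unique way to subtract $2^r=4$ is from the \emph{matched} entry $7$, producing $3$ which re-matches an entry of $\beta_\lambda$ and keeps the distance at $2$. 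So your claimed dichotomy is false, and with it the identification of $\mu$ (up to relabeling) as the distinguished representative. Your bookkeeping for (i) is also off: the admissible indices $i$ with $d_m(\beta_\lambda,\beta_{D(\mu)}+2^re_i)=2$ are the two mismatch positions \emph{plus} up to two further positions $j,k$ with $b_j=b_1-2^r$, $b_k=b_2-2^r$ (four indices, one partition each), not ``at most two values of $i$''; classes of size $4$ occur, e.g.\ for $\lambda=(4,4,2,2,1,1,1)$.

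The unresolved collision you flag at the end is precisely the substance of part (ii), and Lemma \ref{Distance 1} does not dispose of it: that lemma only excludes $d(\lambda,D(\mu))=1$, whereas the problem is that $\alpha=\beta_{D(\mu)}+2^re_1$ may fail to be a sequence of $\beta$-numbers at all because the entry $b_1-h_1+2^r$ already occurs in $\beta_\lambda$. In that case no relabeling of $\mu$ works; the paper instead produces the distinguished representative from the different sequence $\gamma=(b_2-h_2+2^r,\,b_1-h_1,\,b_1-h_1+2^r,\dots)$, and must verify $\gamma\in\mathscr B_n$ using the uniqueness statement of Corollary \ref{OddnessBetaNumbers} (together with the auxiliary fact $h_2>0$, which you never establish and which is needed for the dominance condition). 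Without this construction the existence claim in (ii) is not proved.
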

\begin{proof}
Let $\beta_{\lambda}:=(b_1,b_2, \dots, b_m) \in \mathscr{B}(\lambda)$. We assume that $b_i >0$ for all $i$, which means that the entries of $\beta_{\lambda}$ are the hook lengths of the first column of $[\lambda]$. After reordering we can assume that there are $h_1, h_2 \in \mathbb Z \backslash \{  0 \}$ such that \[\beta_{D(\mu)}:=(b_1-h_1,b_2-h_2, \dots, b_m) \in \mathscr{B}(D(\mu)).\] Assume without loss of generality that $b_1>b_2$ and $b_{1}-h_1>b_{2}-h_2$.

An upper bound for $|(\mu)_{\sim}|$ is the number of indices $i$ such that \[d_m(\beta_{\lambda}, \beta_{D(\mu)}+2^r e_{i})=2.\]
 
There are at most four possibilities for $i$: Either $i=1$, or $i=2$, or if there is an index $j$ (resp. $k$) such that $b_{j}=b_{1}-2^r$ (resp. $b_{k}=b_{2}-2^r$), then $i$ could also be equal to $j$ or $k$. Thus, $|(\mu)_{\sim}|$ is at most $4$.

For the next claim, first observe that $h_2>0$. Indeed, $h_1+h_2=2^r$, so $h_2<0$ implies that $h_1>2^r$. Then \[(b_1-|h_2|,b_2+|h_2|, \dots, b_m) \in \mathscr{B}(\mu).\] But since $b_1>b_2$, it would follow that $\mu \trianglelefteq \lambda$ which is absurd.

Next, we regard the sequence \[\alpha:=\beta_{D(\mu)}+2^r e_1=(b_1-h_1+2^r,b_2-h_2, \dots, b_m).\] If $\alpha \in \mathscr{B}_n$, then by setting $\kappa:=\mathrm{Part}(\alpha)$ we have found a distinguished representative $\kappa \in (\mu)_{\sim}$.

So assume that $\alpha \not\in \mathscr{B}_n $. This means that the entry $b_1-h_1+2^r$ appears twice in $\alpha$, in position $1$ and in another position $i$. Since $b_1-h_1>b_2-h_2$, we know that $i \neq 2$. After a reordering we can assume that $i=3$, i.e., \[\beta_{\lambda}=(b_1,b_2,b_1-h_1+2^r, \dots).\]

Consider the sequence \[\gamma:= (b_2-h_2+2^r,b_1-h_1,b_1-h_1+2^r, \dots).\]

First, since $b_3=b_1-h_1+2^r<2^{r+1}$, it holds that $b_1-h_1<2^r$. Clearly $b_2-h_2 \geq 0$, it follows that \[b_2-h_2+2^r>b_1-h_1.\]

Assume now there is an index $j \geq 4$ such that $b_j$ appears in the first or second entry of $\gamma$. Since $\beta_{D(\mu)}$ is a sequence of $\beta$-numbers, it follows that $b_j \neq b_1-h_1$. So then $b_j=b_2-h_2+2^r$. But then both $\beta_{\lambda}-2^r e_3$ and $\beta_{\lambda}-2^r e_j$ are sequences of $\beta$-numbers which contradicts Corollary \ref{OddnessBetaNumbers}.

We conclude that $\gamma \in \mathscr{B}_n$. Finally, we can now set $\kappa:=\mathrm{Part}(\gamma)$ to be a distinguished representative of $(\mu)_{\sim}.$
\end{proof}

\begin{example}
\label{Example211111511}
Let $\lambda=(2,1^{(5)})$ as in  the examples \ref{Example21111331} and \ref{Example211111EquivalenceClasses}. Let $\mu_1:=(7)$ and $\mu_2:=(5,1,1)$. Then $d(\lambda,D(\mu_1))=d(\lambda, D(\mu_2))=2$ with the size of their respective equivalence classes being $3$ and $1$. 

Let $\beta_{\lambda}:=(7,5,4,3,2,1)$ and $\beta_{\mu_1}=(12,0,4,3,2,1)$ be sequences of $\beta$-numbers of $\lambda$ and $\mu_1$, respectively. Then $\beta_{\mu_1}-4e_1=(8,0,4,3,2,1) \in \mathscr{B}((3))$. So we see that $(7)$ is a distinguished representative of its equivalence class. Similarly, we can confirm the same for $\mu_2$. 
\end{example}

The following definition gives us a setup to handle the classification of the distance $2$ cases.
\begin{definition}
\label{DefinitionClassif Distance 2}
Let $\mu$ be an odd branch of $\lambda$ with $d(\lambda, D(\mu))=2$ and assume that $\mu$ is a distinguished representative in its equivalence class. We fix some sequences of $\beta$-numbers
\begin{align*}
\beta_{\lambda}:=&(b_1,b_2, b_3, \dots, b_m) \in \mathscr{B}(\lambda), \\
\beta_{\mu}:=&(b_1',b_2', b_3, \dots, b_m) \in \mathscr{B}(\mu)
\end{align*}
with $b_i>0$ for all $i$, $b_1 >b_2$ and $b_1'>b_2'$.
Define the sequence \[\alpha_{\mu}:=\beta_{\mu}-2^r e_1+2^r e_2=(b_1'-2^r,b_2'+2^r, b_3, \dots, b_m).\]
\end{definition}

\begin{lemma}
\label{Classif Distance 2}
Assume we are in the situation of Definition \ref{DefinitionClassif Distance 2}. Further assume that $h^{2}_{\lambda}(\mu)>2^r$.
\begin{enumerate}[label=(\roman*)]
\item Assume that $\alpha_{\mu} \in \mathscr{B}_n$. Then $|(\mu)_{\sim}|=4$. Moreover, $(\mu)_{\sim}$ is odd. 
\item Assume that $\alpha_{\mu} \not\in \mathscr{B}_n$. Then $|(\mu)_{\sim}|=3$. Moreover, $\mathrm{OddRank}(\lambda)>0$, $D(\mu)$ is an odd branch of $D(\lambda)$ and $(\mu)_{\sim}$ is odd if and only if $\{D(\mu)  \} \subseteq \mathscr{P}_{D(\lambda)}^{\mathrm{odd}}$ is odd.
\end{enumerate}
\end{lemma}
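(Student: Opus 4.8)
The plan is to proceed by a careful case analysis on the signs of $2^r - h^1_\lambda(\mu)$ and $2^r - h^2_\lambda(\mu)$, translated into the combinatorics of the $\beta$-number sequences from Definition~\ref{DefinitionClassif Distance 2}. Recall the distinguished representative $\mu$ satisfies $\beta_\mu - 2^r e_1 \in \mathscr B(D(\mu))$, and we are assuming $h^2_\lambda(\mu) = b_2 - b_2' = b_1' - b_1 > 2^r$. Since $b_1' > b_2'$, the sequence $\alpha_\mu = (b_1' - 2^r, b_2' + 2^r, b_3, \dots, b_m)$ is obtained by moving $2^r$ from the first entry to the second; the hypothesis $h^2_\lambda(\mu) > 2^r$ is exactly what guarantees $b_1' - 2^r > b_1 \geq b_2 > b_2'$, hence $b_1' - 2^r > b_2' + 2^r$ is not automatic — rather one should check the orderings that make $\alpha_\mu$ a candidate $\beta$-sequence for a partition of $n$ and, more importantly, a candidate \emph{branch} of $\lambda$. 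The first substantive step is to identify the elements of $(\mu)_\sim$ concretely: by Lemma~\ref{Distance 2} they are among $\mathrm{Part}(\beta_{D(\mu)} + 2^r e_i)$ for $i \in \{1, 2, j, k\}$, where $j, k$ are the (at most one each) positions with $b_j = b_1 - h_1 - 2^r$-type coincidences; I would show that $i = 1$ gives $D(\mu)$'s preimage $= \mu$ itself, $i = 2$ gives (a partition whose $\beta$-sequence is) $\alpha_\mu$ when $\alpha_\mu \in \mathscr B_n$, and the indices $j$, $k$ always contribute.

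For part~(i), assuming $\alpha_\mu \in \mathscr B_n$: I would first verify all four indices yield genuine odd branches, giving $|(\mu)_\sim| = 4$ by the upper bound in Lemma~\ref{Distance 2}. Then, writing the four branches explicitly via their $\beta$-sequences differing from $\beta_\lambda$ in two coordinates each, I would compute the product $\prod_{\kappa \in (\mu)_\sim} h^1_\lambda(\kappa)/h^2_\lambda(\kappa)$. The key observation is that the four numerators and four denominators are, as a multiset, obtained from the \emph{same} four integers $\{b_1 - b_2', \, b_1 - (b_2 - 2^r), \, (b_1 - 2^r) - b_2', \dots\}$ (the pairwise differences among the four relevant entries $b_1, b_2, b_1 - 2^r$-shifted values, $b_1' - 2^r$), so that the product over the class is a ratio whose numerator and denominator have the same $2$-adic valuation — in fact the product telescopes to a square times an odd number. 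Concretely, the four fractions pair up so that the $2$-part cancels: this is the heart of why $(\mu)_\sim$ is odd, and checking it amounts to a finite computation of four differences of four numbers and observing the multiset equality.

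For part~(ii), assuming $\alpha_\mu \notin \mathscr B_n$: the failure means $b_1' - 2^r$ or $b_2' + 2^r$ already occurs among $b_3, \dots, b_m$; as in the proof of Lemma~\ref{Distance 2} one argues (using $h^2_\lambda(\mu) > 2^r$ and Corollary~\ref{OddnessBetaNumbers} to rule out multiple coincidences) that exactly one of the four candidate indices drops out, so $|(\mu)_\sim| = 3$. The surviving coincidence, say $b_3 = b_1' - 2^r$ after reordering, forces $\beta_\lambda - 2^r e_3 \in \mathscr B_n$, hence by Corollary~\ref{OddnessBetaNumbers} we get $\mathrm{OddRank}(\lambda) > 0$ and $\beta_{D(\lambda)} := \beta_\lambda - 2^r e_3 \in \mathscr B(D(\lambda))$. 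Then I would show $D(\mu)$ is a branch of $D(\lambda)$ by exhibiting $\beta_{D(\mu)}$ and $\beta_{D(\lambda)}$ as differing in exactly two coordinates and checking dominance, and verify it is odd since $D(\mu)$ is odd by construction. Finally, to get the "odd iff" statement, I would compute $\prod_{\kappa \in (\mu)_\sim} h^1_\lambda(\kappa)/h^2_\lambda(\kappa)$ and compare it with $h^1_{D(\lambda)}(D(\mu))/h^2_{D(\lambda)}(D(\mu))$: the three fractions over the class and the single fraction over $D(\lambda)$ should agree up to a square, because the ``missing'' fourth branch from part~(i) is precisely what would have made the product a perfect square, and its absence leaves behind exactly the fraction $h^1_{D(\lambda)}(D(\mu))/h^2_{D(\lambda)}(D(\mu))$ (up to squares), mirroring the distance-$3$ case of Lemma~\ref{Distance 3}.

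**The main obstacle** I anticipate is the bookkeeping in part~(i): correctly enumerating the four branches, writing down each $h^i_\lambda(\kappa)$ as a difference of two specific $\beta$-entries, and then recognizing the multiset identity that forces the $2$-adic valuations of the total numerator and denominator to coincide. The orderings (which entry is largest, whether $b_1 - 2^r$ exceeds $b_2$, etc.) depend on the sign hypothesis $h^2_\lambda(\mu) > 2^r$ and must be tracked consistently; a sign error here would break the cancellation. A secondary subtlety is ensuring that each candidate partition is genuinely \emph{odd} — this should follow from Lemma~\ref{OddnessPartitions} together with the fact that each lies in $(\mu)_\sim$, hence has the same image under $D$ as the odd partition $\mu$, but one must confirm the relevant hook of length $2^r$ exists in each.
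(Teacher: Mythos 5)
Your plan follows the paper's own proof quite closely: enumerate the candidates $\mathrm{Part}(\beta_{D(\mu)}+2^re_i)$ coming from Lemma \ref{Distance 2}, compute the product of the ratios over the class, and in case (ii) match the leftover factor against $h^1_{D(\lambda)}(D(\mu))/h^2_{D(\lambda)}(D(\mu))$. Writing $h^i:=h^i_{\lambda}(\mu)$, the computation does come out as you predict: the four fractions are (up to inverting individual factors, which is harmless modulo squares) $\frac{h^1}{h^2}$, $\frac{h^2}{h^1-2^r}$, $\frac{h^1}{h^2-2^r}$, $\frac{h^1-2^r}{h^2-2^r}$, with product $\left(\frac{h^1}{h^2-2^r}\right)^2$, and omitting the last leaves $\frac{h^1-2^r}{h^2-2^r}$ modulo squares, which is exactly $\frac{h^1_{D(\lambda)}(D(\mu))}{h^2_{D(\lambda)}(D(\mu))}$. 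The genuine gap is the step you wave through with ``the indices $j$, $k$ always contribute'': you must \emph{prove} that $b_1-2^r$ and $b_2-2^r$ actually occur among $b_3,\dots,b_m$, and this is precisely where the hypothesis $h^2>2^r$ earns its keep. Indeed, $h^2=b_2-b_2'>2^r$ forces $b_1>b_2>2^r$; if, say, $b_1-2^r$ did not occur in $\beta_{\lambda}$, then $\beta_{\lambda}-2^re_1\in\mathscr{B}_{n-2^r}$, hence $\beta_{\lambda}-2^re_1\in\mathscr{B}(D(\lambda))$ by Corollary \ref{OddnessBetaNumbers}; but since all $b_i>0$ its entries are (essentially) first-column hook lengths of a partition of $n-2^r<2^r$, yet it still contains $b_2>2^r$, a contradiction. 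Without this argument your counts $4$ and $3$ are only upper bounds, and the square structure of the product collapses.

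There is a second, concrete error in your treatment of (ii). You locate the failure of $\alpha_{\mu}\in\mathscr{B}_n$ at a coincidence $b_i=b_1'-2^r$. That coincidence is impossible: $\beta_{\mu}-2^re_1=(b_1'-2^r,b_2',b_3,\dots,b_m)$ is already a sequence of $\beta$-numbers (for $D(\mu)$, by the distinguished-representative property of Lemma \ref{Distance 2}), so $b_1'-2^r$ clashes with no $b_i$ and not with $b_2'$; and $b_1'-2^r=b_2'+2^r$ would force $h^1+h^2=2^{r+1}$, impossible since $h^1>h^2>2^r$. The only possible clash is $b_i=b_2'+2^r$ for some $i\geq3$, and it is $\beta_{\lambda}-2^re_i\in\mathscr{B}(D(\lambda))$ (an element of $\mathscr{B}_{n-2^r}$, not of $\mathscr{B}_n$ as you wrote) that yields $\mathrm{OddRank}(\lambda)>0$ and the identification $h^k_{D(\lambda)}(D(\mu))=h^k-2^r$. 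As stated, your deduction in (ii) starts from an event that cannot occur. Two smaller slips: $b_1'-2^r>b_2'+2^r$ is in fact automatic from $h^2>2^r$, contrary to your remark; and the multiset of numerators does not equal the multiset of denominators --- what is true, and what you actually need, is that their union contains each of the four values $h^1,h^2,h^1-2^r,h^2-2^r$ with even multiplicity.
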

\begin{proof}
First note that both $b_1,b_2>2^r$. It follows that neither \[\beta_{\lambda}-2^r e_{1}=(b_1-2^r,b_2, b_3, \dots, b_m)\] nor \[\beta_{\lambda}-2^r e_{2}=(b_1,b_2-2^r, b_3, \dots, b_m)\] are sequences of $\beta$-numbers.  Indeed, assume $\beta_{\lambda}-2^r e_{i}$ were a sequence of $\beta$-numbers, for $i$ either $1$ or $2$. Since all the $b_j>0$ and $b_i>2^r$, the sequence $\beta_{\lambda}-2^r e_{i}$ corresponds to the hook lengths of the first column of the partition $D(\lambda) \in \mathscr{P}_{n-2^r}$. This is a contradiction, since $b_{3-i}>2^r>n-2^r$.

So there are indices $i,j>2$ with $b_i=b_1-2^r, b_j=b_2-2^r$. In other words, after a reordering, we have that \[ \beta_{\lambda}=(b_1,b_2,b_1-2^r,b_2-2^r, b_5, \dots, b_m).\] We define 
\begin{align*}
    \gamma_1 := & \, \beta_{\lambda}+(h^1_{\lambda}(\mu)-2^r)e_2-(h^1_{\lambda}(\mu)-2^r)e_3= \\
    =& \, (b_1,b_1'-2^r,b_2',b_2-2^r, b_5, \dots, b_m) \in \mathscr{B}_{n}
\end{align*}
and
\begin{align*}
 \gamma_2 := & \, \beta_{\lambda}+   (h^2_{\lambda}(\mu)-2^r)e_1-(h^2_{\lambda}(\mu)-2^r)e_4= \\
 = & \, (b_1'-2^r,b_2,b_1-2^r,b_2', b_5, \dots, b_m)\in \mathscr{B}_n.
\end{align*}

Let $\kappa_1:=\mathrm{Part}(\gamma_1)$, $\kappa_2:=\mathrm{Part}(\gamma_2)$. It is clear that both $\gamma_1-2^r e_1$ and $\gamma_2-2^r e_2$ are sequences of $\beta$-numbers for $D(\mu)$. Thus $\kappa_1, \kappa_2 \in (\mu)_{\sim}$.

If $\alpha_{\mu}$ is a sequence of $\beta$-numbers, then also  $\alpha_{\mu}-2^r e_2$ is a sequence of $\beta$-numbers for $D(\mu)$ and thus $\kappa_3:= \mathrm{Part}(\alpha_{\mu}) \sim \mu$. So \[(\mu)_{\sim}=\{\mu,\kappa_1, \kappa_2, \kappa_3 \}\] with
\[ \prod_{\nu \in  (\mu)_{\sim}}  \frac{h_{\lambda}^1(\nu) }{h_{\lambda}^2(\nu)} =\frac{h^1_{\lambda}(\mu)}{h^2_{\lambda}(\mu)}  \cdot \frac{h^2_{\lambda}(\mu)}{h^1_{\lambda}(\mu)-2^r} \cdot \frac{h^1_{\lambda}(\mu)}{h^2_{\lambda}(\mu)-2^r} \cdot \frac{h^1_{\lambda}(\mu)-2^r}{h^2_{\lambda}(\mu)-2^r}=\left( \frac{h^1_{\lambda}(\mu)}{h^2_{\lambda}(\mu)-2^r}  \right)^2\] and so  $(\mu)_{\sim}$ is odd.

If on the other hand $\alpha_{\mu}$ is not a sequence of $\beta$-numbers, then the entry $b_2'+2^r$ already appears somewhere in $\beta_{\lambda}$. So after reordering, $\beta_{\lambda}$ is of the form \[(b_1,b_2,b_1-2^r,b_2-2^r, b_2'+2^r, \dots, b_m).\] Thus $\mathrm{OddRank}(\lambda)>0$ and \[\beta_{\lambda}-2^r e_5=(b_1,b_2,b_1-2^r,b_2-2^r, b_2', \dots, b_m) \in  \mathscr{B}(D(\lambda)).\] Moreover, \[(\mu)_{\sim} =\{\mu,\kappa_1,\kappa_2 \}\] and it is clear that $D(\mu) \in \mathscr{P}_{\lambda}^{\mathrm{odd}}$ with $h^{k}_{D(\lambda)}(D(\mu))=h^k_{\lambda}(\mu)-2^r$ for $k=1,2$ from which the statement follows.
\end{proof}

\begin{example}
\begin{enumerate}[label=(\roman*)]
\item Let $\lambda=(4,4,2,2,1,1,1)$ and let $\mu=(13,1,1)$. Then $\mu$ is an odd brach of $\lambda$  with $d(\lambda, D(\mu))=2$ and $\mu$ is a distinguished representative. Moreover, \[(\mu)_{\sim}=\{\mu, (5,5,2,2,1), (5,4,2,2,1,1), (5,3,2,2,1,1,1)  \}\] and we are in the situation of Lemma \ref{Classif Distance 2}(i).
\item It is easy to confirm that the partitions $\lambda=(2, 1^{(5)})$ and $\mu=(7)$ as in the previous examples are
an instance of Lemma \ref{Classif Distance 2}(ii).
\end{enumerate}
\end{example}

\begin{lemma}
\label{Classif Distance 2 Part 2}
Assume we are in the situation of Definition \ref{DefinitionClassif Distance 2}. Further assume that $h^1_{\lambda}(\mu)<2^r$.
\begin{enumerate}[label=(\roman*)]
\item Assume that $\alpha_{\mu} \in \mathscr{B}_n$. Then $|(\mu)_{\sim}|=2$. Moreover, $(\mu)_{\sim}$ is odd. 
\item Assume that $\alpha_{\mu} \not\in \mathscr{B}_n$. Then $|(\mu)_{\sim}|=1$. Moreover, $\mathrm{OddRank}(\lambda)>0$, $D(\mu)$ is an odd branch of $D(\lambda)$ and $(\mu)_{\sim}$ is odd if and only if $\{D(\mu)  \} \subseteq \mathscr{P}_{D(\lambda)}^{\mathrm{odd}}$ is odd.
\end{enumerate}
\end{lemma}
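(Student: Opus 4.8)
The plan is to mirror the proof of Lemma \ref{Classif Distance 2} almost verbatim, adjusting only for the change in sign hypotheses. The key observation driving everything is that the hypothesis $h^1_\lambda(\mu) < 2^r$ together with $h^1_\lambda(\mu) + h^2_\lambda(\mu) = 2^r$ (which comes from $\mu$ being a distinguished representative, so that $\beta_\mu - 2^r e_1 \in \mathscr{B}(D(\mu))$, forcing $b_1' - 2^r = b_1 - h^1_\lambda(\mu) - \text{(something)}$; more precisely $h^1_\lambda(\mu) + h^2_\lambda(\mu) = h^1_{D(\mu)}$-type relation, let me say $h^1_\lambda(\mu) = 2^r - (\text{a positive integer})$) forces $h^2_\lambda(\mu) < 0$ in the relevant $\beta$-number bookkeeping, or more carefully, that the two ``extra'' indices $i, j > 2$ with $b_i = b_1 - 2^r$, $b_j = b_2 - 2^r$ that appeared in Lemma \ref{Classif Distance 2} now typically do \emph{not} both exist. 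So first I would set up the sequences $\beta_\lambda = (b_1, b_2, b_3, \dots, b_m)$ and $\beta_\mu = (b_1', b_2', b_3, \dots, b_m)$ with $b_i > 0$, $b_1 > b_2$, $b_1' > b_2'$ as in Definition \ref{DefinitionClassif Distance 2}, and analyze which of the candidate indices $i$ with $d_m(\beta_\lambda, \beta_{D(\mu)} + 2^r e_i) = 2$ actually occur, using $h^1_\lambda(\mu) < 2^r$ to rule out the cases that produced $\kappa_1, \kappa_2$ in the previous lemma. This should bring the count down by $2$, explaining why $|(\mu)_\sim| \in \{1, 2\}$ rather than $\{3, 4\}$.

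Next, for part (i), assuming $\alpha_\mu = \beta_\mu - 2^r e_1 + 2^r e_2 \in \mathscr{B}_n$, I would argue that $(\mu)_\sim = \{\mu, \kappa_3\}$ where $\kappa_3 = \mathrm{Part}(\alpha_\mu)$, by checking that $\alpha_\mu - 2^r e_2 \in \mathscr{B}(D(\mu))$ so indeed $\kappa_3 \sim \mu$. Then I would compute the product directly: from the definitions of $h^i_\lambda$, one gets $h^1_\lambda(\kappa_3) = h^2_\lambda(\mu)$ and $h^2_\lambda(\kappa_3) = $ the quantity making the telescoping work, so that
\[
\prod_{\nu \in (\mu)_\sim} \frac{h^1_\lambda(\nu)}{h^2_\lambda(\nu)} = \frac{h^1_\lambda(\mu)}{h^2_\lambda(\mu)} \cdot \frac{h^2_\lambda(\mu)}{h^1_\lambda(\mu)}
\]
or a similar two-term product that is visibly a square (here the factor $2^r$ should not appear, since by Lemma \ref{Distance 1} we have $2^r \notin \{h^1_\lambda(\mu), h^2_\lambda(\mu)\}$, and with $h^1_\lambda(\mu) < 2^r$ the shifted quantity stays away from $2^r$ as well). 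Hence $(\mu)_\sim$ is odd. The exact form of the two factors has to be read off from the $\beta$-number manipulation, but it will collapse to $c^2 \cdot (\mathbb{Q}^\times)^2$ for an odd $c$, just as in Lemma \ref{Classif Distance 2}(i) but with two factors instead of four.

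For part (ii), assuming $\alpha_\mu \notin \mathscr{B}_n$, the entry $b_2' + 2^r$ already occurs in $\beta_\lambda$, say in position $5$ after reordering, which forces $\mathrm{OddRank}(\lambda) > 0$ and $\beta_\lambda - 2^r e_5 \in \mathscr{B}(D(\lambda))$ exactly as in Lemma \ref{Classif Distance 2}; I would then verify $(\mu)_\sim = \{\mu\}$ (no room for the other representatives because $h^1_\lambda(\mu) < 2^r$) and that $D(\mu) \in \mathscr{P}^{\mathrm{odd}}_{D(\lambda)}$ with $h^k_{D(\lambda)}(D(\mu)) = h^k_\lambda(\mu)$ for $k = 1, 2$ (note: no $-2^r$ correction here, in contrast to Lemma \ref{Classif Distance 2}(ii), precisely because the shifted index is the ``fifth'' one and the first two entries are untouched by the passage to $D(\lambda)$). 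From this the stated equivalence ``$(\mu)_\sim$ is odd iff $\{D(\mu)\} \subseteq \mathscr{P}^{\mathrm{odd}}_{D(\lambda)}$ is odd'' follows immediately, since the single-factor product $h^1_\lambda(\mu)/h^2_\lambda(\mu)$ equals $h^1_{D(\lambda)}(D(\mu))/h^2_{D(\lambda)}(D(\mu))$ up to squares (in fact on the nose). The main obstacle I anticipate is the careful bookkeeping of exactly which indices survive under the hypothesis $h^1_\lambda(\mu) < 2^r$ — in particular ruling out the existence of an index $i > 2$ with $b_i = b_1 - 2^r$ giving a second member of $(\mu)_\sim$ — and making sure the telescoping product in (i) genuinely has the $2^r$ factors cancel rather than survive; both reduce to the same kind of argument used in Lemmas \ref{Distance 1}, \ref{Classif Distance 2}, so it is bookkeeping rather than a new idea.
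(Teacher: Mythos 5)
Your proposal has the right skeleton (bound $|(\mu)_\sim|$ using the candidate-index analysis from Lemma \ref{Distance 2}, compute the two-term product in (i), and in (ii) locate the collision of $b_2'+2^r$ with an entry of $\beta_\lambda$ to get $\mathrm{OddRank}(\lambda)>0$ and transfer the hook lengths), and this is indeed how the paper proceeds. However, each of the three concrete computations you would need is wrong as stated. First, the relation $h^1_\lambda(\mu)+h^2_\lambda(\mu)=2^r$ is false; you are conflating the hook lengths $h^i_\lambda(\mu)=b_i-b_2'$ with the shifts $h_1,h_2$ of Lemma \ref{Distance 2} (which do sum to $2^r$). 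In fact $h^1_\lambda(\mu)-h^2_\lambda(\mu)=b_1-b_2$, and being a distinguished representative gives $b_1'-2^r=b_1+h^2_\lambda(\mu)-2^r$, nothing more. Second, your mechanism for $|(\mu)_\sim|\leq 2$ --- that the indices $i\geq 3$ with $b_i=b_1-2^r$ or $b_i=b_2-2^r$ ``typically do not exist'' --- is not the right reason and would fail: take $\lambda=(6,3,2,1)$, $\mu=(9,1,1,1)$, $2^r=8$, $\beta_\lambda=(9,5,3,1)$; here $b_1-2^r=1$ \emph{is} an entry of $\beta_\lambda$, yet the resulting candidate $(6,2,2,2)$ is strictly dominated by $\lambda$ and hence not a branch. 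The correct argument is a dominance check: since $h^2_\lambda(\mu)<h^1_\lambda(\mu)<2^r$, both new entries $b_1'-2^r$ and $b_2'$ lie strictly between $b_1-2^r$ and $b_2$ (and between $b_2-2^r$ and $b_1$), so these candidates are squeezed inward and fail $\lambda\trianglelefteq\kappa$.

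Third, the explicit numbers in (i) and (ii) are not what you guessed, and the factor $2^r$ genuinely survives. In (i) one finds $h^1_\lambda(\kappa_3)=2^r-h^2_\lambda(\mu)$ and $h^2_\lambda(\kappa_3)=2^r-h^1_\lambda(\mu)$, so the product is $\frac{h^1_\lambda(\mu)}{h^2_\lambda(\mu)}\cdot\frac{2^r-h^2_\lambda(\mu)}{2^r-h^1_\lambda(\mu)}$, which is not visibly a square; oddness requires the observation that $\nu_2(2^r-h)=\nu_2(h)$ for $0<h<2^r$. In (ii) the transfer is $h^i_{D(\lambda)}(D(\mu))=2^r-h^{3-i}_\lambda(\mu)$, not the identity: the first two entries of $\beta_{D(\mu)}$ are $b_1'-2^r$ and $b_2'$, not $b_1'$ and $b_2'$, so the comparison with $\beta_{D(\lambda)}=(b_1,b_2,\dots)$ changes both hooks and swaps their roles. (Check with the paper's example $\lambda=(5,5,2,1)$, $\mu=(7,6)$: $h^1_\lambda(\mu)=7$, $h^2_\lambda(\mu)=3$, but $h^1_{D(\lambda)}(D(\mu))=5$, $h^2_{D(\lambda)}(D(\mu))=1$.) The stated equivalence then again rests on $\nu_2(2^r-h)=\nu_2(h)$ rather than being ``on the nose.'' So while the architecture of your argument matches the paper, the parts you flagged as ``bookkeeping'' are exactly where the content lies, and the bookkeeping you sketched does not go through.
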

\begin{proof}
One sees easily that the assumption implies that $|(\mu)_{\sim}| \leq 2$. Thus, if $\alpha_{\mu} \in \mathscr{B}_n$ then \[(\mu)_{\sim}=\{\mu, \mathrm{Part}(\alpha_{\mu}) \}\] with \[ \prod_{\nu \in  (\mu)_{\sim}}  \frac{h_{\lambda}^1(\nu) }{h_{\lambda}^2(\nu)} =\frac{h_{\lambda}^1(\mu)}{h_{\lambda}^2(\mu)} \cdot \frac{2^r-h_{\lambda}^2(\mu)}{2^r-h_{\lambda}^1(\mu)}\] and thus $(\mu)_{\sim}$ is odd.
  
So assume now that $\alpha_{\mu} \not\in \mathscr{B}_n$. We can now argue as in Lemma \ref{Classif Distance 2} and see that $\mathrm{OddRank}(\lambda)>0$ and $D(\mu) \in \mathscr{P}_{D(\lambda)}^{\mathrm{odd}}$. Moreover, $(\mu)_{\sim}=\{ \mu \}$ and \[h^{i}_{D(\lambda)}(D(\mu))=2^r-h_{\lambda}^{3-i}(\mu)\] for $i=1,2$, which shows the claim.
\end{proof}

\begin{example}
\begin{enumerate}[label=(\roman*)]
\item Let $\lambda=(6,3,2,1), \mu_1=(9,1,1,1)$ and $\mu_2=(7,2,2,1)$ be partitions of $12$. Then $\mu_1$ and $\mu_2$ are odd branches of $\lambda$ with $d(\lambda, D(\mu_1))=2$ and $(\mu_1)_{\sim} = \{ \mu_1, \mu_2 \}$. Both $\mu_1$ and $\mu_2$ are distinguished representatives in their equivalence class and we are in the situation of Lemma \ref{Classif Distance 2 Part 2}(i).
\item Let $\lambda=(5,5,2,1)$ and let $\mu=(7,6)$ be partitions of $13$. Then $\mu$ is an odd branch of $\lambda$ with $d(\lambda, D(\mu))=2$ and $\mu$ is a distinguished representative. Moreover, \[(\mu)_{\sim}=\{\mu\}\] and we are in the situation of Lemma \ref{Classif Distance 2 Part 2}(ii).
\end{enumerate}
\end{example}

\begin{lemma}
\label{Classif Distance 2 Part 3}
Assume we are in the situation of Definition \ref{DefinitionClassif Distance 2}. Further assume that $h_{\lambda}^{1}(\mu)>2^r$, $h_{\lambda}^{2}(\mu)<2^r$.
\begin{enumerate}[label=(\roman*)]
\item If there is an index $i\geq 3$ with $b_1-b_i=2^r$, then $|(\mu)_{\sim}|=2.$ Moreover, $(\mu)_{\sim}$ is odd.
\item If there does not exist an index $i$ with $b_1-b_i=2^r$, then $|(\mu)_{\sim}|=1$. Moreover, $\mathrm{OddRank}(\lambda)>0$, $D(\mu)$ is an odd branch of $D(\lambda)$ and $(\mu)_{\sim}$ is odd if and only if $\{D(\mu)  \} \subseteq \mathscr{P}_{D(\lambda)}^{\mathrm{odd}}$ is odd.
\item If $b_1-b_2=2^r$, then $|(\mu)_{\sim}|=1$. Moreover, $(\mu)_{\sim}$ is odd.
\end{enumerate}
\end{lemma}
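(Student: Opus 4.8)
The plan is to proceed exactly as in the proofs of Lemmas \ref{Classif Distance 2} and \ref{Classif Distance 2 Part 2}, keeping track of the $\beta$-sequence of $\lambda$ and the perturbations that produce the members of $(\mu)_{\sim}$. Write $\beta_\lambda=(b_1,b_2,b_3,\dots,b_m)$ and $\beta_\mu=(b_1',b_2',b_3,\dots,b_m)$ with $b_1>b_2$, $b_1'>b_2'$, and recall from Lemma \ref{Distance 2}(ii) that $\beta_\mu-2^r e_1=(b_1'-2^r,b_2',b_3,\dots,b_m)\in\mathscr{B}(D(\mu))$, so that $\beta_{D(\mu)}=(b_1-h_1,b_2-h_2,b_3,\dots,b_m)$ where $h_1=h_\lambda^1(\mu)$, $h_2=h_\lambda^2(\mu)$ (up to the same reordering convention as before), $h_1+h_2=2^r$ — wait, here the hypothesis is $h_1>2^r>h_2$, so $h_1-h_2=\dots$; more precisely $b_1'=b_1+h_2$ wait, let me just say: under the standing conventions $h_1>2^r$ and $0<h_2<2^r$. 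As in Lemma \ref{Classif Distance 2}, the candidates for elements of $(\mu)_{\sim}$ correspond to indices $i$ with $d_m(\beta_\lambda,\beta_{D(\mu)}+2^r e_i)=2$: these are $i=1$ (giving back $\mu$), $i=2$, and possibly an index $j\geq 3$ with $b_j=b_1-2^r$ or $b_k=b_2-2^r$. The key point distinguishing this lemma from the previous two is that here $b_2-h_2>b_2-2^r$ wait — since $h_2<2^r$ we have $b_2-h_2>b_2-2^r$, and whether $b_2-2^r$ or $b_1-2^r$ occurs as some $b_i$ is exactly the case split into (i)/(ii)/(iii).

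First I would show $|(\mu)_\sim|\le 2$ in all three cases: the index $i=2$ gives $\beta_{D(\mu)}+2^r e_2=(b_1-h_1,b_2-h_2+2^r,b_3,\dots,b_m)$, and since $b_2-h_2+2^r>b_2$ (as $h_2<2^r$) while $b_2-h_2+2^r$ must equal some $b_\ell$ for the distance to be $2$, and $b_1-h_1<b_1$ wait $b_1-h_1=b_1-h_1$ with $h_1>2^r$ so $b_1-h_1<b_1-2^r$; one checks using $b_3<2^{r+1}$ and Corollary \ref{OddnessBetaNumbers} that at most one extra index beyond $i=1$ can contribute, and it is tied to the existence of an index with $b_1-b_i=2^r$. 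For (i), if such an index $i\geq 3$ exists, reorder so $b_3=b_1-2^r$; then $\gamma:=\beta_\lambda+(h_1-2^r)e_3-(h_1-2^r)e_2$ — I would write down the analogue of the $\gamma_1,\gamma_2$ of Lemma \ref{Classif Distance 2}, verify $\gamma\in\mathscr{B}_n$ using $b_1-h_1<b_1-2^r=b_3$ and $0\le b_2-h_2$ and a no-collision argument via Corollary \ref{OddnessBetaNumbers}, set $\kappa:=\mathrm{Part}(\gamma)$, confirm $\gamma-2^r e_3\in\mathscr{B}(D(\mu))$ hence $\kappa\sim\mu$, and then compute the telescoping product $\frac{h_\lambda^1(\mu)}{h_\lambda^2(\mu)}\cdot\frac{h_\lambda^1(\kappa)}{h_\lambda^2(\kappa)}$, which should collapse to an odd square times an odd integer — I expect it to be $\bigl(\tfrac{h_1}{h_2}\bigr)\cdot\bigl(\tfrac{h_2}{h_1-2^r}\bigr)$ or similar, giving a square-free odd value, so $(\mu)_\sim$ is odd.

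For (ii), no index has $b_1-b_i=2^r$, so $\beta_\lambda-2^r e_1\in\mathscr{B}_{n-2^r}$ and by Corollary \ref{OddnessBetaNumbers} it lies in $\mathscr{B}(D(\lambda))$, whence $\mathrm{OddRank}(\lambda)>0$; I would then show $(\mu)_\sim=\{\mu\}$ (no index $i=2$ contribution either, since $b_2-2^r$ is not among the $b_j$ — this needs a short argument using that $b_2-2^r<b_1-2^r$ is not attained and the $i=2$ candidate would force such an attainment), identify $D(\mu)$ as a branch of $D(\lambda)$ by comparing $\beta_\lambda-2^r e_1$ with $\beta_{D(\mu)}-2^r e_1$, read off $h^i_{D(\lambda)}(D(\mu))$ in terms of $h^i_\lambda(\mu)$ and $2^r$ (I expect $h^1_{D(\lambda)}(D(\mu))=h_1-2^r$, $h^2_{D(\lambda)}(D(\mu))=h_2$), and conclude the ``if and only if'' since the ratio $\frac{h_\lambda^1(\mu)}{h_\lambda^2(\mu)}$ differs from $\frac{h^1_{D(\lambda)}(D(\mu))}{h^2_{D(\lambda)}(D(\mu))}$ by the factor $\frac{h_1}{h_1-2^r}$, which is \emph{not} obviously odd — so here I must instead argue that the square class of the whole class $(\mu)_\sim=\{\mu\}$ agrees with that of $\{D(\mu)\}$ \emph{up to an odd square}, meaning the relevant primes cancel; this is the delicate point. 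For (iii), $b_1-b_2=2^r$ is a degenerate sub-case of (i) with the special index being $i=2$ itself: then $b_2=b_1-2^r$, the $\gamma$-construction of (i) still produces a partition but one checks it coincides with $\mu$ (or with $\lambda$, which is excluded), forcing $|(\mu)_\sim|=1$, and the product is just $\frac{h_1}{h_2}$ — I would need $b_1-b_2=2^r$ together with $b_1'>b_2'$ to pin down that $h_1$ and $h_2$ are such that $h_1/h_2$ is an odd square-free integer, presumably because $h_1=2^r+h_2$ and $h_2$ divides... actually the cleanest route is that $h_1^2\equiv h_2^2\pmod{\text{powers of }2}$ fails, so I should instead directly compute the $2$-adic valuations: $v_2(h_1)=v_2(h_2+2^r)$ and since $0<h_2<2^r$ we get $v_2(h_1)=v_2(h_2)$, so $h_1/h_2$ has even (in fact zero) $2$-adic valuation — that is exactly ``odd'' in the sense of the definition once combined with the odd-prime part, which I would need to handle separately or absorb.

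The main obstacle I anticipate is case (iii) and the ``iff'' bookkeeping in (ii): in the earlier lemmas the telescoping was clean because two ratios shared a common numerator/denominator, but here the single-element class forces one to argue about $2$-adic valuations of $h_\lambda^1(\mu)$ and $h_\lambda^2(\mu)$ directly, using $0<h_\lambda^2(\mu)<2^r<h_\lambda^1(\mu)$ and $h_\lambda^1(\mu)+h_\lambda^2(\mu)=2^r$ — no wait, that sum identity is precisely what fails here, so the correct relation coming from $d(\lambda,D(\mu))=2$ and $\beta_\mu-2^re_1\in\mathscr{B}(D(\mu))$ must be worked out carefully; getting that relation right, and then showing it forces $v_2(h_\lambda^1(\mu))=v_2(h_\lambda^2(\mu))$, is the crux.
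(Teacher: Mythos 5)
Your outline follows the paper's strategy in broad strokes, but three points are genuine gaps rather than routine verifications. First, the bound $|(\mu)_{\sim}|\leq 2$ does not follow from what you sketch. Of the four candidate indices from Lemma \ref{Distance 2}(i), two must be eliminated by observations you never make: (a) the candidate coming from $\alpha_{\mu}=(b_1'-2^r,b_2'+2^r,b_3,\dots,b_m)$ may well lie in $\mathscr{B}_n$, but because $h^1_{\lambda}(\mu)>2^r$ and $h^2_{\lambda}(\mu)<2^r$ both modified entries are smaller than $b_1$, so $\mathrm{Part}(\alpha_{\mu})\trianglelefteq\lambda$ and it is not a branch --- this is why, unlike in Lemmas \ref{Classif Distance 2} and \ref{Classif Distance 2 Part 2}, the case split here is \emph{not} governed by whether $\alpha_{\mu}\in\mathscr{B}_n$, yet your write-up keeps treating that as the relevant dichotomy; (b) one must prove $b_2<2^r$ (if $b_2\geq 2^r$ then $b_1'=b_2+h^1_{\lambda}(\mu)>2^{r+1}$ forces some entry of $\beta_{\mu}$ to be $0$, hence $b_2'=0$ and $b_2=h^2_{\lambda}(\mu)<2^r$, a contradiction), which kills the candidate tied to an entry $b_2-2^r$; without this your trichotomy by the existence of an index with $b_1-b_i=2^r$ is unjustified. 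Second, your explicit perturbation in case (i) is wrong: $\beta_{\lambda}+(h^1_{\lambda}(\mu)-2^r)(e_3-e_2)$ does not become $\beta_{D(\mu)}$ after removing a $2^r$-hook (a direct computation shows the entries only match when $h^1_{\lambda}(\mu)=2^r$ or $b_1-b_2=2^r$, both excluded in case (i)); the correct companion is $\gamma=\beta_{\lambda}+h^2_{\lambda}(\mu)\,e_3-h^2_{\lambda}(\mu)\,e_2=(b_1,b_2',b_1'-2^r,b_4,\dots,b_m)$, for which $\gamma-2^r e_1$ is a rearrangement of $\beta_{D(\mu)}$.

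The worries you flag at the end are real but close with one identity you essentially already have: since $2^r<h^1_{\lambda}(\mu)\leq n<2^{r+1}$, we get $v_2(h^1_{\lambda}(\mu))=v_2(h^1_{\lambda}(\mu)-2^r)$. In case (i) the product over $(\mu)_{\sim}$ has square class $h^1_{\lambda}(\mu)\,(h^1_{\lambda}(\mu)-2^r)\cdot(\mathbb{Q}^{\times})^2$ in either sub-ordering of $b_2$ versus $b_1-2^r$, hence even $2$-adic valuation, hence odd. In case (ii) that same quantity is exactly the transfer factor between the square classes of $h^1_{\lambda}(\mu)/h^2_{\lambda}(\mu)$ and of $h^1_{D(\lambda)}(D(\mu))/h^2_{D(\lambda)}(D(\mu))$, so the ``iff'' you call delicate follows immediately; note also that oddness of a subset of $\mathscr{P}_{\lambda}^{\mathrm{odd}}$ only constrains the $2$-part of the square class, so there is no separate ``odd-prime part'' to absorb. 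Your case (iii) argument via $h^1_{\lambda}(\mu)=h^2_{\lambda}(\mu)+2^r$ and $v_2(h^1_{\lambda}(\mu))=v_2(h^2_{\lambda}(\mu))$ is correct and is essentially the paper's; but contrary to your closing paragraph, $v_2(h^1_{\lambda}(\mu))=v_2(h^2_{\lambda}(\mu))$ is not the crux of cases (i) and (ii) and is generally false there.
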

\begin{proof}
Unlike in the lemmas \ref{Classif Distance 2} and \ref{Classif Distance 2 Part 2}, there is no partition in $\mathscr{P}_{\lambda}^{\mathrm{odd}}$ that can come from $\alpha_{\mu}$. Assume that $\alpha_{\mu} \in \mathscr{B}_n$. Then $b_1 > b_1+h_{\lambda}^{2}(\mu)-2^r=b_1'-2^r$ since $h_{\lambda}^{2}(\mu)<2^r$ and $b_1 > b_1-h_{\lambda}^{1}(\mu)+2^r=b_2'+2^r$ since $h_{\lambda}^{1}(\mu)>2^r$. It thus follows that $\mathrm{Part}(\alpha_{\mu}) \trianglelefteq \lambda$, so indeed, $\mathrm{Part}(\alpha_{\mu})$ is not a branch of $\lambda$.

Next, we convince ourselves that $b_2<2^r$. Assume by contradiction that $b_2 \geq 2^r$, so $b_2+h_{\lambda}^{1}(\mu)=b_1'>2^{r+1}$. This means that $b_1'$ can not be the hook length of any $c \in [\lambda]$, in particular, there has to be an entry in $\beta_{\mu}$ that is equal to $0$. Since all the entries $b_i>0$, it follows that $b_2'=0$. But then $b_2=h_{\lambda}^{2}(\mu)<2^r$ by assumption, which is a contradiction. So $|(\mu)_{\sim}| \leq 2$ now follows by the discussion in the proof of Lemma \ref{Distance 2}(i).

Let us now discuss the case of $|(\mu)_{\sim}|=2.$ For this to occur, there has to be an index $i \geq 3$ such that $b_1-b_i=2^r$. Assume this is the case; after some reordering we can assume that $i=3$. So \[\beta_{\lambda}=(b_1, b_2, b_1-2^r, b_4, \dots, b_m).\] Let 
\begin{align*}
 \gamma := & \, \beta_{\lambda}+h^2_{\lambda}(\mu)e_3-h^2_{\lambda}(\mu)e_2= \\
 = & \, (b_1,b_2',b_1'-2^r,b_4, \dots, b_m)\in \mathscr{B}_n.
\end{align*}
Let $\kappa:=\mathrm{Part}(\gamma)$. It is easy to see that $\kappa \in \mathscr{P}_{\lambda}^{\mathrm{odd}}$ and that $(\mu)_{\sim}=\{\mu, \kappa  \}.$ If $b_2>b_1-2^r$, then \[h^{1}_{\lambda}(\kappa)=h^2_{\lambda}(\mu), \ h^{2}_{\lambda}(\kappa) =h^{1}_{\lambda}(\mu)-2^r.\] If $b_2<b_1-2^r$, then \[h^{1}_{\lambda}(\kappa)=h^{1}_{\lambda}(\mu)-2^r, \ h^{2}_{\lambda}(\kappa) =h^{2}_{\lambda}(\mu).\] In both cases, $(\mu)_{\sim}$ is odd.

So in all other cases, $|(\mu)_{\sim}|=1$. Assume that there is no index $i$ such that $b_1-b_i=2^r$. Then $\mathrm{OddRank}(\lambda)>0$ and \[ (b_1-2^r,b_2, \dots, b_m) \in \mathscr{B}(D(\lambda)).\] Moreover, we see that $D(\mu)$ is an odd branch of $D(\lambda)$. We have the same case distinction as before: If $b_2>b_1-2^r$, then \[h^{1}_{D(\lambda)}(D(\mu)) =h^{2}_{\lambda}(\mu), \ h^{2}_{D(\lambda)}(D(\mu))=h^{1}_{\lambda}(\mu)-2^r.\] If $b_2<b_1-2^r$, then \[h^{1}_{D(\lambda)}(D(\mu))= h^{1}_{\lambda}(\mu)-2^r, \ h^{2}_{D(\lambda)}(D(\mu))=h^{2}_{\lambda}(\mu).\]

To finish the proof, assume now that $b_1-b_2=2^r$. Then $h^{1}_{\lambda}(\mu)=h^{2}_{\lambda}(\mu)+2^r$ and so $(\mu)_{\sim}$ is odd. The statement now follows.
\end{proof}

\begin{example} 
\begin{enumerate}[label=(\roman*)]
\item Let $\lambda=(4, 2^{(4)},1^{(3)})$ and let $\mu=(5, 2^{(4)},1,1)$. Then $\mu$ is an odd branch of $\lambda$ with $d(\lambda, D(\mu))=2$ and $\mu$ is a distinguished representative. Moreover, \[(\mu)_{\sim}=\{\mu, (4, 2^{(5)},1)  \}\] and we are in the situation of Lemma \ref{Classif Distance 2 Part 3}(i).
\item Let $\lambda=(10,2)$ and let $\mu=(11,1)$. Then $\mu$ is an odd branch of $\lambda$ with $d(\lambda, D(\mu))=2$ and $\mu$ is a distinguished representative. Moreover, \[(\mu)_{\sim}=\{\mu\}\] and we are in the situation of Lemma \ref{Classif Distance 2 Part 3}(ii). 
\item Let $\lambda=(2, 1^{(5)})$ and let $\mu=(5,1,1)$, as seen in Example \ref{Example211111511}. Then $\mu$ is an odd branch of $\lambda$ with $d(\lambda, D(\mu))=2$ and $\mu$ is a distinguished representative. Moreover, \[(\mu)_{\sim}=\{\mu\}\] and we are in the situation of Lemma \ref{Classif Distance 2 Part 3}(iii). 
\end{enumerate}
\end{example}

We gather the previous results in the following statement:
\begin{corollary}
\label{MainCorollary}
Let $\mu$ be an odd branch of $\lambda$. Then at least one of the following is true:
\begin{enumerate}[label=(\roman*)]
\item $(\mu)_{\sim}$ is odd.
\item $\mathrm{OddRank}(\lambda)>0$, $D(\mu)$ is an odd branch of $D(\lambda)$ and $(\mu)_{\sim}$ is odd if and only if $\{D(\mu)  \} \subseteq \mathscr{P}_{D(\lambda)}^{\mathrm{odd}}$ is odd.
\end{enumerate}
\end{corollary}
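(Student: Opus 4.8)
The statement merely repackages the lemmas of this section, so the plan is to run the case analysis and confirm that it is exhaustive. I would open with a harmless reduction: if $\mu\sim\kappa$ then $(\mu)_{\sim}=(\kappa)_{\sim}$ and, by the very definition of $\sim$, $D(\mu)=D(\kappa)$, so both alternatives (i) and (ii) are insensitive to the choice of representative inside $(\mu)_{\sim}$. Hence, in every situation where Lemma \ref{Distance 2}(ii) supplies a distinguished representative of $(\mu)_{\sim}$, I may assume $\mu$ itself is that representative and work in the setup of Definition \ref{DefinitionClassif Distance 2}.

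By Lemma \ref{Distance 1} we know $d(\lambda,D(\mu))\in\{2,3\}$ and $2^r\notin\{h^1_{\lambda}(\mu),h^2_{\lambda}(\mu)\}$, and I split on this distance. If $d(\lambda,D(\mu))=3$, then Lemma \ref{Distance 3} gives $|(\mu)_{\sim}|=1$, $\mathrm{OddRank}(\lambda)>0$, $D(\mu)\in\mathscr{P}_{D(\lambda)}^{\mathrm{odd}}$, and $h^i_{\lambda}(\mu)=h^i_{D(\lambda)}(D(\mu))$ for $i=1,2$; consequently $\prod_{\nu\in(\mu)_{\sim}}h^1_{\lambda}(\nu)/h^2_{\lambda}(\nu)=h^1_{D(\lambda)}(D(\mu))/h^2_{D(\lambda)}(D(\mu))$, so $(\mu)_{\sim}$ is odd if and only if $\{D(\mu)\}$ is odd, which is precisely alternative (ii).

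If $d(\lambda,D(\mu))=2$, I pass to the distinguished representative and note that the convention $b_1>b_2$ of Definition \ref{DefinitionClassif Distance 2} forces $h^1_{\lambda}(\mu)=b_1-b_2'>b_2-b_2'=h^2_{\lambda}(\mu)$. Combined with $2^r\notin\{h^1_{\lambda}(\mu),h^2_{\lambda}(\mu)\}$, exactly one of $2^r<h^2_{\lambda}(\mu)$, $h^1_{\lambda}(\mu)<2^r$, or $h^2_{\lambda}(\mu)<2^r<h^1_{\lambda}(\mu)$ holds, and these are exactly the hypotheses of Lemmas \ref{Classif Distance 2}, \ref{Classif Distance 2 Part 2}, and \ref{Classif Distance 2 Part 3}, respectively. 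In the first two lemmas the dichotomy $\alpha_{\mu}\in\mathscr{B}_n$ versus $\alpha_{\mu}\notin\mathscr{B}_n$ yields alternative (i) resp.\ (ii); in the third, the possibilities according to whether some $b_1-b_i=2^r$ holds with $i\geq 3$, with $i=2$, or for no index $i$ --- mutually exclusive and exhaustive because the $b_i$ are distinct --- yield (i), (i), (ii). Reading these conclusions off for the distinguished representative and transporting them back through $(\mu)_{\sim}$ and $D(\mu)$ finishes every case. The proof introduces no new idea beyond the lemmas already established; the one point that genuinely needs care is this exhaustiveness --- verifying that $h^1_{\lambda}(\mu)>h^2_{\lambda}(\mu)$ collapses the four a priori sign patterns of $(2^r-h^1_{\lambda}(\mu),2^r-h^2_{\lambda}(\mu))$ onto the three treated lemmas, and that the internal trichotomy of Lemma \ref{Classif Distance 2 Part 3} covers all configurations.
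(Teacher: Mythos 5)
Your proposal is correct and is exactly the argument the paper leaves implicit: the corollary is stated as a consolidation of Lemmas \ref{Distance 1}, \ref{Distance 3}, \ref{Distance 2}, \ref{Classif Distance 2}, \ref{Classif Distance 2 Part 2} and \ref{Classif Distance 2 Part 3}, and your case analysis (distance $3$ versus distance $2$, then the sign trichotomy forced by $h^1_{\lambda}(\mu)>h^2_{\lambda}(\mu)$ and $2^r\notin\{h^1_{\lambda}(\mu),h^2_{\lambda}(\mu)\}$, plus the internal dichotomies of each lemma) is the intended exhaustive check. The observations that both alternatives depend only on the class $(\mu)_{\sim}$ and on $D(\mu)$, and that the singleton class in the distance-$3$ case reduces to $\{D(\mu)\}$ via the equality of hook quotients, correctly justify passing to distinguished representatives, so nothing is missing.
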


We need one final piece to finish the puzzle: 
\begin{lemma}
\label{Preimage of D}
Assume $\mathrm{OddRank}(\lambda)>0$ and let $\kappa$ be an odd branch of $D(\lambda)$. Then there is an odd branch $\mu$ of $\lambda$ such that $D(\mu)=\kappa$ and $(\mu)_{\sim}$ is odd if and only if $\{\kappa\} \subseteq \mathscr{P}_{D(\lambda)}^{\mathrm{odd}}$ is odd.
\end{lemma}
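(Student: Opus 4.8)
The plan is to reverse the constructions in Lemmas \ref{Distance 3}, \ref{Classif Distance 2}, \ref{Classif Distance 2 Part 2} and \ref{Classif Distance 2 Part 3}: starting from an odd branch $\kappa$ of $D(\lambda)$, I will build an odd branch $\mu$ of $\lambda$ with $D(\mu) = \kappa$, and then check that the resulting $\mu$ falls into one of the cases where Corollary \ref{MainCorollary}(ii) applies with $D(\mu) = \kappa$. Since $\mathrm{OddRank}(\lambda) > 0$, we may fix $\beta_\lambda = (b_1, b_2, \dots, b_m) \in \mathscr{B}(\lambda)$ with all $b_i > 0$ and, by Lemma \ref{OddnessPartitions}, a unique index $t$ with $b_t > 2^r$ and $\beta_\lambda - 2^r e_t \in \mathscr{B}(D(\lambda))$; after reordering assume $t = 1$, so $\beta_{D(\lambda)} := (b_1 - 2^r, b_2, \dots, b_m)$. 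Since $\kappa$ is a branch of $D(\lambda)$, there are indices $p < q$ (among $1, \dots, m$, now with the first entry being $b_1 - 2^r$) and a value $h$ such that replacing entries $p, q$ by a rebalanced pair produces $\beta_\kappa \in \mathscr{B}(\kappa)$; concretely $\beta_\kappa$ differs from $\beta_{D(\lambda)}$ in two coordinates, with $h^1_{D(\lambda)}(\kappa), h^2_{D(\lambda)}(\kappa)$ the two hook lengths involved.

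The main case split is whether the coordinate $b_1 - 2^r$ is one of the two coordinates in which $\beta_\kappa$ differs from $\beta_{D(\lambda)}$. If it is not, then the same two-coordinate modification applied to $\beta_\lambda$ (untouched in coordinate $1$) yields a sequence $\beta_\mu \in \mathscr{B}_n$; one checks $d(\lambda, \mu) = 2$, that $\lambda \trianglelefteq \mu$ (the modification increases a partial sum exactly as it did for $D(\lambda) \trianglelefteq \kappa$, since coordinate $1$ only shifts everything uniformly), and that $\beta_\mu - 2^r e_1 = \beta_\kappa$, so $D(\mu) = \kappa$ by Corollary \ref{OddnessBetaNumbers}; moreover $h^i_\lambda(\mu) = h^i_{D(\lambda)}(\kappa)$, so this reproduces the situation of Lemma \ref{Distance 3} and $(\mu)_\sim = \{\mu\}$ is governed exactly by $\{\kappa\}$. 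If instead $b_1 - 2^r$ is one of the two modified coordinates, say coordinate $1$ has value $b_1 - 2^r$ in $\beta_{D(\lambda)}$ and value $b_1 - 2^r + \delta$ in $\beta_\kappa$ for some $\delta \neq 0$: then I define $\beta_\mu$ by putting $b_1 + (\delta - 2^r) + 2^r = b_1 + \delta$ in coordinate $1$ — wait, more carefully, I set the coordinate-$1$ entry to $b_1 - 2^r + \delta + 2^r = b_1 + \delta$ and keep the other modified coordinate as in $\beta_\kappa$; this is the reverse of the $\alpha_\mu$-constructions. One must verify $\beta_\mu \in \mathscr{B}_n$ (the potential collision is with some $b_j$, $j \geq 3$, equal to $b_1 + \delta$ or to the other new value — the uniqueness statement of Corollary \ref{OddnessBetaNumbers} again forces at most one such collision and lets us handle it), that $\lambda \trianglelefteq \mu$, and that $D(\mu) = \kappa$; then $d(\lambda, D(\mu)) \in \{2,3\}$ and we land in case (ii) of one of Lemmas \ref{Classif Distance 2}, \ref{Classif Distance 2 Part 2}, \ref{Classif Distance 2 Part 3}, whose conclusion is precisely that $(\mu)_\sim$ is odd iff $\{\kappa\} \subseteq \mathscr{P}^{\mathrm{odd}}_{D(\lambda)}$ is odd.

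The hard part is the bookkeeping around which coordinate plays the role of the "$2^r$-shifted" slot and showing that, in the collision subcase, the modified sequence is still a genuine $\beta$-sequence and still dominance-above $\lambda$ — these are exactly the delicate points that occupied the proofs of Lemmas \ref{Distance 2} and \ref{Classif Distance 2}, and running them in reverse requires care that no new coincidence of entries is introduced and that the partial-sum inequalities defining $\lambda \trianglelefteq \mu$ are not violated by the extra $2^r$ in coordinate $1$. Once $\mu$ is produced and shown to be an odd branch of $\lambda$ with $D(\mu) = \kappa$, the equivalence-of-oddness claim is immediate: if $\mu$ satisfies Corollary \ref{MainCorollary}(i) we would need $d(\lambda, D(\mu)) $ to have produced an "$(\mu)_\sim$ unconditionally odd" case, which by inspection of the four lemmas happens only alongside a side condition we can arrange to avoid (or, if it cannot be avoided, then $\{\kappa\}$ is itself forced odd and there is nothing to prove); otherwise we are in case (ii) and the biconditional with $\{\kappa\}$ is exactly the stated conclusion.
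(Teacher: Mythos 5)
Your overall strategy---normalise $\beta_\lambda$ so that the $2^r$-shifted coordinate sits in position $1$, then split according to whether that coordinate is among the two coordinates in which $\beta_\kappa$ differs from $\beta_{D(\lambda)}$---is the same as the paper's. But there is a genuine gap in your first case. When position $1$ is \emph{not} modified, you assert that applying the same two-coordinate modification to $\beta_\lambda$ automatically yields $\beta_\mu \in \mathscr{B}_n$ and lands you in the situation of Lemma \ref{Distance 3}. This is false: the two new entries cannot collide with $b_2,\dots,b_m$ (they coexist with those inside $\beta_\kappa$), but nothing prevents one of them from equalling $b_1$, since $\beta_\kappa$ contains $b_1-2^r$ rather than $b_1$. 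The paper's proof devotes two of its subcases precisely to these collisions ($b_1=b_3-h$ and $b_1=b_2+h$), where the lift must be built differently and the resulting $\mu$ satisfies $d(\lambda,D(\mu))=2$, falling under Lemma \ref{Classif Distance 2}(ii) resp.\ Lemma \ref{Classif Distance 2 Part 2}(ii)---exactly the cases that carry the genuine biconditional with $\{\kappa\}$, so they cannot be absorbed into the distance-$3$ case. A concrete instance from the paper: $\lambda=(2,1^{(5)})$, $\kappa=(3)$. Reordering $\beta_\lambda=(4,7,5,3,2,1)$ so that $\beta_{D(\lambda)}=(0,7,5,3,2,1)$, the branch $\kappa$ modifies positions $2,3$ (values $7,5\mapsto 8,4$) and leaves position $1$ untouched, yet the naive lift $(4,8,4,3,2,1)$ is not a sequence of $\beta$-numbers; the actual preimage is $\mu=(7)$, a distance-$2$ case handled by Lemma \ref{Classif Distance 2}(ii).

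A second misalignment: in your case where position $1$ is modified, you claim to land in ``case (ii)'' of one of the distance-$2$ lemmas. In the paper this case is resolved by adding $2^r$ to the \emph{other} modified coordinate, and it lands in Lemma \ref{Classif Distance 2 Part 3}(iii), where $(\mu)_\sim$ is unconditionally odd; the stated biconditional then requires checking that $\{\kappa\}$ is odd as well in that configuration, which your closing sentence only gestures at (``$\{\kappa\}$ is itself forced odd''). Without the collision analysis in the first case and this verification in the second, the argument is not complete.
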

\begin{proof}
Let \[\beta_{\lambda}:=(b_1,b_2,b_3, \dots, b_m) \in \mathscr{B}(\lambda)\] and assume that, after a suitable reordering, \[(b_1-2^r,b_2,b_3, \dots, b_m) \in \mathscr{B}(D(\lambda)).\] There are two indices $i,j$ and a positive integer $h$ such that \[\beta_{\lambda}-2^r e_1+h e_i-h e_j \in \mathscr{B}(\kappa)\] and $b_i>b_j$. There are three possibilities: Either $1 \notin \{i,j\}$, or $i=1$, or $j=1$. We will regard \[\alpha:=\beta_{\lambda}+h e_i-h e_j.\]

Assume first that $1 \notin \{i,j\}$. After a reordering we can assume that $i=2, j=3$. There are three further possibilities: Either $\alpha \in \mathscr{B}_n$, or $b_1=b_3-h$, or $b_1=b_2+h$.

If $\alpha \in \mathscr{B}_n$, then it is clear that we can set $\mu:=\mathrm{Part}(\alpha)$ with $\mu \in \mathscr{P}_{\lambda}^{\mathrm{odd}}$ and that $d(\lambda, \kappa)=3$. We are thus in the situation of Lemma \ref{Distance 3} and we are done.

Assume now that $b_1=b_3-h$. Then \[\gamma:=(b_1-2^r,b_2+h+2^r,b_3-h,b_4, \dots, b_m) \in \mathscr{B}_n.\] If we now set $\mu:=\mathrm{Part}(\gamma)$, then clearly $\mu \in \mathscr{P}_{\lambda}^{\mathrm{odd}}$ and $d(\lambda, \kappa)=2$. Moreover, $\mu$ is a distinguished representative of its equivalence class and \[h^{1}_{\lambda}(\mu)=b_2-b_3+h+2^r, \ h^{2}_{\lambda}(\mu)=h+2^r,\] so we are in the case of Lemma \ref{Classif Distance 2}(ii).

Assume now that $b_1=b_2+h$. Here, we can set \[\gamma:=(b_1-2^r,b_2+h,b_3-h+2^r,b_4, \dots, b_m) \in \mathscr{B}_n.\] We again set $\mu:=\mathrm{Part}(\gamma)$ and again, $d(\lambda, \kappa)=2$ and $\mu$ is a distinguished representative of its equivalence class. We calculate that \[h^{1}_{\lambda}(\mu)=2^r-h, \ h^{2}_{\lambda}(\mu)=2^r-(b_2-b_3+h),\] so we are in the case of Lemma \ref{Classif Distance 2 Part 2}(ii). 

Being done with that case, we can now assume that either $i=1$ or $j=1$. After reordering, we have \[(b_1-2^r+\delta h,b_2- \delta h, b_3, \dots, b_m) \in \mathscr{B}(\kappa)\] for 
\[ \delta=
\begin{cases}
1, & \text{if} \ i=1, \\
-1, & \text{if} \ j=1.
\end{cases}
\]
We set \[\gamma:=(b_1-2^r+\delta h,b_2- \delta h+2^r,b_3, \dots, b_m) \in \mathscr{B}_n \]  and let $\mu:=\mathrm{Part}(\gamma)$ be the corresponding partition. It is easy to see that $d(\lambda, \kappa)=2$ and $\mu$ is a distinguished representative of its equivalence class. We quickly confirm that we are in the case of Lemma \ref{Classif Distance 2 Part 3}(iii). This concludes the proof.
\end{proof}

We can now finally prove the main theorem:

\begin{proof}(of Theorem \ref{Main-Thm-Symmetric})
We will show that $\mathscr{P}_{\lambda}^{\mathrm{odd}}$ is odd by induction on the oddness rank of $\lambda$.
To begin, the statement for $\mathrm{OddRank}(\lambda)=0$ immediately follows by Corollary \ref{MainCorollary}.

So assume now $\mathrm{OddRank}(\lambda)>0$ and that we know the result to hold for $D(\lambda)$. We define the set \[M := D^{-1}( \mathscr{P}_{D(\lambda)}^{\mathrm{odd}}) \subseteq \mathscr{P}_{\lambda}^{\mathrm{odd}},\] where the subset relation holds by Lemma \ref{OddnessPartitions}. 

We decompose \[\mathscr{P}_{\lambda}^{\mathrm{odd}}=M \cup M'\] for some other set $M' \subseteq \mathscr{P}_{\lambda}^{\mathrm{odd}}$ into a disjoint union. Lemma \ref{Preimage of D} tells us that $M$ is odd if and only if $\mathscr{P}_{D(\lambda)}^{\mathrm{odd}}$ is odd, which we know by the induction hypothesis. Moreover, for all $ \mu \in M'$, it holds that $D(\mu)$ is not an odd branch of $D(\lambda)$ and so by Corollary \ref{MainCorollary}, the set $(\mu)_{\sim}$ is odd; it follows that the whole set $M'$ must be odd, and so $\mathscr{P}_{\lambda}^{\mathrm{odd}}$ is odd. The theorem now follows by Remark \ref{RemarkEquivalentThm}.
\end{proof}

\begin{remark}
We have confirmed that $a_{\lambda}^{(2)}$ is even for any even partition $\lambda$. So if there is a partition $\mu$ such that $a_{\mu}^{(2)}$ is odd, then $\mu$ must necessarily be odd as well. The reason this can occur for odd partitions relies critically in the failure of Lemma \ref{Distance 1}.  We denote \[A(n):= \# \{ \mu \in \mathscr{P}_n \mid a_{\mu} \ \text{is odd}  \}.\] and \[B(n):= \# \{ \mu \in \mathscr{P}_n \mid \mu \ \text{is odd}  \}.\]

It is clear that $B(n)$ is an upper bound for $A(n)$; an explicit formula for $B(n)$ can be found in \cite{MacDonald}. With some code in GAP \cite{GAP4}, we calculated the values of $A(n)$ for $1 \leq n \leq 100$. 

\begin{table}[ht]
    \centering
    \begin{tabular}{c||c |c |c |c |c |c |c |c |c |c}
       $n$ & $1$ & $2$ & $3$ & $4$ & $5$ & $6$ & $7$ & $8$& $9$ & $10$ \\ \hline  \addlinespace[0.1em]
    $A(n)$ & $0$ & $1$ & $1$ & $2$ & $2$ & $4$ & $4$ & $4$ & $4$ & $4$ \\ \hline  \addlinespace[0.1em]
    $B(n)$ & $1$ & $2$ & $2$ & $4$ & $4$ & $8$ & $8$ & $8$ & $8$ & $16$ \\
    \multicolumn{11}{c}{} \\
      $n$ & $11$ & $12$ & $13$ & $ 14$ & $15$ & $16$ &  $17$ & $18$ & $19$ & $20$\\ \hline  \addlinespace[0.1em]
    $A(n)$ & $8$ & $20$ & $16$ & $32$ & $32$ & $8$ & $8$ & $16$ & $16$ & $24$  \\ \hline  \addlinespace[0.1em]
    $B(n)$ & $16$ & $32$ & $32$ & $64$ & $64$ & $16$ & $16$ & $32$ & $32$ & $64$ \\
     \multicolumn{11}{c}{} \\
      $n$ &  $21$ & $22$ & $23$ & $24$& $25$ & $26$ & $27$ & $28$ & $29$ & $30$ \\ \hline  \addlinespace[0.1em]
    $A(n)$ & $24$ & $64$ & $64$ & $64$ & $64$ & $128$ & $128$ & $256$ & $256$ & $512$  \\ \hline  \addlinespace[0.1em]
    $B(n)$ & $64$ & $128$ & $128$ & $128$ & $128$ & $256$ & $256$ & $512$ & $512$ & $1024$  \\
    \multicolumn{11}{c}{} \\ 
     $n$ &  $31$ & $32$ & $33$ & $34$ & $35$ & $36$ & $37$ & $38$& $39$ & $40$\\ \hline  \addlinespace[0.1em]
    $A(n)$ & $480$ & $16$ & $16$ & $28$ & $32$ & $68$ & $64$ & $128$ & $128$ & $112$ \\ \hline  \addlinespace[0.1em]
    $B(n)$ & $1024$ & $32$ & $32$ & $64$ & $64$ & $128$ & $128$ & $256$ & $256$ & $256$ 
    \end{tabular}
      \caption{The values of $A(n)$ and $B(n)$ for the first $40$ values of $n$.}
        \label{Table1}
\end{table}
Some patterns are easily observed: It seems that $A(n)=B(n)/2$ in most cases; for $1 \leq n \leq 100$, this holds true for $75$ values of $n$. Moreover, it holds that the difference $B(n)/2-A(n) \geq 0$ with only a few exceptions; these being $n=12$, $36$, $48$, $54$, $91$ and $92$ for $n \leq 100$. The values of $|B(n)/2-A(n)|$ seem to be either an integer power of $2$, or a multiple of $12$; the nonzero multiples of $12$ occurring for $n=41$, $54$, $61$, $90$ and $93$ for $n \leq 100$. 
\end{remark}

\begin{Ack}
I want to thank Gabriele Nebe and Jonas Hetz for their feedback on parts of this paper. This paper is a contribution to Project-ID 286237555 – TRR 195 – by the
Deutsche Forschungsgemeinschaft (DFG, German Research Foundation).
\end{Ack}

\bibliography{References}

\begin{thebibliography}{10}

\bibitem{OddLattice}
Arvind Ayyer, Amritanshu Prasad, and Steven Spallone.
\newblock Odd partitions in {Y}oung's lattice.
\newblock {\em S\'em. Lothar. Combin.}, 75:Art. B75g, 13, 2015.

\bibitem{DeterminantsIwahoriHecke}
Richard Dipper and Gordon James.
\newblock Blocks and idempotents of {H}ecke algebras of general linear groups.
\newblock {\em Proc. London Math. Soc. (3)}, 54(1):57--82, 1987.

\bibitem{GAP4}
The GAP Group.
\newblock {\em {GAP -- Groups, Algorithms, and Programming, Version 4.13.1}}, 2024.

\bibitem{HoyerThesis}
Linda Hoyer.
\newblock {\em Orthogonal Determinants of Finite Groups of Lie Type}.
\newblock PhD thesis, RWTH Aachen University, 2024.

\bibitem{HookRemovalNavarro}
I.~M. Isaacs, Gabriel Navarro, J\o rn~B. Olsson, and Pham~Huu Tiep.
\newblock Character restrictions and multiplicities in symmetric groups.
\newblock {\em J. Algebra}, 478:271--282, 2017.

\bibitem{JamesMurphyFormula}
G.~D. James and G.~E. Murphy.
\newblock The determinant of the {G}ram matrix for a {S}pecht module.
\newblock {\em J. Algebra}, 59(1):222--235, 1979.

\bibitem{james1981representation}
G.D. James and A.~Kerber.
\newblock {\em The Representation Theory of the Symmetric Group}.
\newblock Encyclopedia of mathematics and its applications. Addison-Wesley Publishing Company, Advanced Book Program, 1981.

\bibitem{MacDonald}
I.~G. Macdonald.
\newblock On the degrees of the irreducible representations of symmetric groups.
\newblock {\em Bull. London Math. Soc.}, 3:189--192, 1971.

\bibitem{NebeOrthDisc}
Gabriele Nebe.
\newblock On orthogonal discriminants of characters.
\newblock {\em Albanian J. Math.}, 16(1):41--49, 2022.

\bibitem{NebeOrtDet}
Gabriele Nebe.
\newblock Orthogonal determinants of characters.
\newblock {\em Arch. Math. (Basel)}, 119(1):19--26, 2022.

\bibitem{Branches}
Yukio Tsushima.
\newblock On some simple constituents of the {S}pecht modules of the symmetric groups.
\newblock {\em J. Algebra}, 243(1):131--141, 2001.

\end{thebibliography}
\end{document}